\numberwithin{equation}{section}
\newtheorem{theorem}{Theorem}[section]
\newtheorem{corollary}[theorem]{Corollary}
\newtheorem{proposition}[theorem]{Proposition}
\theoremstyle{definition}
\newtheorem{remark}[theorem]{Remark}
\newtheorem{definition}[theorem]{Definition}
\newtheorem{example}[theorem]{Example}
\newenvironment{df}{\begin{definition}\rm}{\end{definition}}
\newenvironment{rem}{\begin{remark}\rm}{\end{remark}}
\newcommand{\R}{\mathbb R}
\newcommand{\Z}{\mathbb Z}
 \newcommand{\mS}{\mathcal{S}}
\newcommand{\be}{\mathbf{e}}
\newcommand{\tmA}{\widetilde{\mathcal{A}}}
\DeclareMathOperator{\Tlk}{Tlk}
\DeclareMathOperator{\Dlk}{Dlk}
\DeclareMathOperator{\lk}{lk}
\DeclareMathOperator{\Aut}{Aut}
\title{On surface links whose link groups are abelian}
\author{Tetsuya Ito}
\author{Inasa Nakamura}
\address{
Tetsuya Ito\newline
Research Institute for Mathematical Sciences, Kyoto University\newline Kyoto, 606-8502, Japan}
\email{tetitoh@kurims.kyoto-u.ac.jp}
\urladdr{http://www.kurims.kyoto-u.ac.jp/\~{}tetitoh/}
\address{Inasa Nakamura \newline
Institute for Biology and Mathematics of Dynamical Cell Processes (iBMath), Interdisciplinary Center for Mathematical Sciences, Graduate School of Mathematical Sciences, The University of Tokyo\newline
3-8-1 Komaba, Tokyo 153-8914, Japan} 
\email{inasa@ms.u-tokyo.ac.jp}
\subjclass[2010]{Primary 57Q45; Secondary 57Q35} 
\keywords{surface link, abelian surface link, double linking number, triple linking number}
\begin{document}
\maketitle

\begin{abstract}
 We study surface links whose link groups are free abelian, and construct various stimulating and highly non-trivial examples of such surface links. 
\end{abstract}

\section{Introduction}
 A {\em surface link} is the image of a smooth embedding of closed surfaces into the Euclidean 4-space $\R^{4}$ (or the 4-sphere $S^{4}$, according to the context). In this paper, we always assume that surfaces are oriented. 

The {\em (total) genus} $g(S)$ of a surface link $S$ is the sum of the genera of the components of $S$. A surface link $S$ is called a {\em 2-link} (resp. {\em $T^{2}$-link}) if each component is a sphere (resp. torus).
Basic facts on surface links can be found in \cite{CKS} which we will use as a main reference.

The {\em link group} of $S$ is the fundamental group of the complement of $S$. In this paper, we study surface links whose link groups are abelian, which will be called {\em abelian surface links}. 
If the link group of $S$ is abelian then it is free of rank $n$, so we will often call an $n$-component abelian surface link an abelian surface link of {\em rank} $n$.

An abelian surface link is the simplest in the sense that its link group is the simplest among all surface links. So one might think that an abelian surface link is not an interesting object to study. In some cases, including the classical case, it is indeed true:
The link group of a classical link $L$ is abelian if and only if $L$ is either an unknot, or a Hopf link \cite[Theorem 6.3.1 -- Exercise 6.3.3]{Kawauchi}. There are no abelian 2-links of rank greater than one \cite[Chapter 3, Corollary 2]{Hillman}; (This also follows from our Theorem \ref{theorem:bound}).

The aim of this paper is to show that there are rather huge non-trivial and stimulating abelian surface links and to demonstrate that abelian surface links are interesting objects to study, despite the first impression.

We begin with studying a constraint about genus and ranks for abelian surface links in Section \ref{sec2}. This motivates us to investigate abelian surface links with small genus and non-trivial double and triple linking numbers. 

In Section \ref{sec:const}, we give a simple construction of abelian surface links of arbitrary rank. Although this construction is less interesting since it yields abelian surface links with trivial double and triple linking numbers, it has an advantage that we get a ribbon abelian surface link. We will provide more complicated and interesting abelian surface links of relatively small total genus by using torus-covering $T^{2}$-links. 

The basics of torus-covering $T^2$-links will be reviewed in Section \ref{sec3}. In Section \ref{sec4}, we prove the double and triple linking number formulae for torus-covering $T^2$-links, which are interesting in their own right.

In Section \ref{sec5}, we construct a variety of stimulating examples of abelian surface links.
We produce abelian $T^{2}$-links of rank four having various double and triple linking numbers. We also demonstrate an example of a pair of inequivalent abelian surface links with the same double and triple linking numbers. These examples illustrate richness of abelian surface links. We also construct abelian surface links of high ranks with smaller total genus than those given in Section \ref{sec:const}.

\section{Genus-rank inequality for abelian surface links}\label{sec2}

 First of all, we study fundamental constraints for abelian surface links. The next theorem provides a lower bound of the total genus of abelian surface links. 

\begin{theorem}
\label{theorem:bound}
If $S$ is an abelian surface link of rank $n>1$, then we have an inequality
\begin{equation}
\label{eqn:bound}
n(n-1) \leq 4 g(S).
\end{equation} 
\end{theorem}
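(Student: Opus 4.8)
The plan is to translate the hypothesis into homological data on the exterior $X=S^{4}\setminus\nu(S)$, a compact oriented $4$-manifold whose boundary is $\partial X=\bigsqcup_{i=1}^{n}S_{i}\times S^{1}$ (the normal Euler number of an oriented surface in $\R^{4}$ vanishes, so each normal disk bundle is trivial). First I would compute $H_{*}(X;\Z)$ by Alexander duality in $S^{4}$: from $\widetilde H^{0}(S)=\Z^{n-1}$, $H^{1}(S)=\Z^{2g(S)}$, and $H^{2}(S)=\Z^{n}$ one reads off $H_{1}(X)\cong\Z^{n}$ (generated by meridians), $H_{2}(X)\cong\Z^{2g(S)}$, and $H_{3}(X)\cong\Z^{n-1}$. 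Thus the genus enters precisely as the second Betti number, $b_{2}(X)=2g(S)$, and $H_{2}(X)$ is generated by the longitudinal tori $\gamma\times S^{1}$ for $\gamma\in H_{1}(S_{i})$ pushed into $\partial X$.

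Next I would use that the link group is abelian, hence $\pi_{1}(X)\cong\Z^{n}$, so the classifying map $f\colon X\to B\Z^{n}=T^{n}$ is an isomorphism on $\pi_{1}$ and on $H_{1}$. By Hopf's theorem (the right-hand end of the five-term exact sequence) the map $f_{*}\colon H_{2}(X)\to H_{2}(T^{n})=\Lambda^{2}\Z^{n}$ is surjective; concretely $f_{*}[\gamma\times S^{1}]=\ell(\gamma)\wedge m_{i}$, where $\ell(\gamma)=\sum_{j}\lk(\gamma,S_{j})\,m_{j}$. Comparing ranks gives $2g(S)=b_{2}(X)\ge\binom{n}{2}$, that is, the non-strict inequality $4g(S)\ge n(n-1)$. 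Dually, writing $\alpha_{1},\dots,\alpha_{n}\in H^{1}(X;\Q)$ for the meridian duals, the cup products $\alpha_{i}\cup\alpha_{j}$ ($i<j$) are $\binom{n}{2}$ linearly independent classes in $H^{2}(X;\Q)$, since $f^{*}$ is injective there. This already recovers the nonexistence of abelian $2$-links of rank $>1$, as $g(S)=0$ forces $\binom{n}{2}=0$, i.e. $n\le1$.

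The main obstacle is upgrading this to the \emph{strict} inequality. When $\binom{n}{2}$ is odd (that is, $n\equiv2,3\pmod 4$) strictness is automatic: $b_{2}(X)=2g(S)$ is even, so $2g(S)\ge\binom{n}{2}$ forces $2g(S)\ge\binom{n}{2}+1$ and hence $4g(S)>n(n-1)$; in particular $n=2,3$ are immediate. The delicate case is $n\equiv0,1\pmod4$ (first $n=4$), and here I would argue by contradiction from the equality $2g(S)=\binom{n}{2}$. In that case $f_{*}$ is an isomorphism on $H_{2}$, equivalently each block $\Phi_{i}\colon H_{1}(S_{i})\to m_{i}\wedge\Z^{n}$, $\gamma\mapsto\ell(\gamma)\wedge m_{i}$, is injective and the blocks are jointly independent; equivalently the classes $\alpha_{i}\cup\alpha_{j}$ form a $\Q$-basis of $H^{2}(X;\Q)$, and the Poincaré--Lefschetz intersection form must be analyzed on them. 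The subtlety is that $\alpha_{i}\cup\alpha_{j}$ does \emph{not} restrict trivially to $\partial X$: its $H^{2}(S_{k};\Q)\cong\Q$ component is the surface intersection number of the linking functionals $\lk(\,\cdot\,,S_{i})$ and $\lk(\,\cdot\,,S_{j})$ on $S_{k}$, so the interior values $\langle\alpha_{i}\alpha_{j}\alpha_{k}\alpha_{l},[X,\partial X]\rangle$ are governed jointly by the class $f_{*}[X,\partial X]\in\Lambda^{4}\Z^{n}$ and by these boundary intersection data. The goal is then to show that nondegeneracy of the form, together with the smallness $b_{3}(X)=n-1$ of $H^{3}(X)$ against $\dim_{\Q}H^{3}(T^{n})=\binom{n}{3}$ (which forces relations among the triple products $\alpha_{i}\alpha_{j}\alpha_{k}$), cannot be reconciled with the rigid pattern just described, thereby producing an extra independent class in $\ker(f_{*}|_{H_{2}})$. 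Pinning down this incompatibility so that it excludes \emph{every} equality configuration, rather than merely generic ones, is the step I expect to require the most care, and it is where the intersection-theoretic content underlying the double and triple linking numbers first appears.
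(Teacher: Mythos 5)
Your first half is sound and coincides with the paper's opening move: Alexander duality gives $H_2(S^4-S)=\Z^{2g}$ and $H_3(S^4-S)=\Z^{n-1}$, and the classifying map to $K(\Z^n,1)$ is surjective on $H_2$, whence $2g\ge\binom{n}{2}$. Your parity observation correctly upgrades this to a strict inequality when $\binom{n}{2}$ is odd, i.e.\ $n\equiv 2,3\pmod 4$. But for $n\equiv 0,1\pmod 4$ --- which includes $n=4$, exactly the case needed for Corollary \ref{cor:bound} --- you do not have a proof. You sketch a program involving the Poincar\'e--Lefschetz intersection form on $H^{2}(X;\Q)$, the boundary restrictions of the classes $\alpha_i\cup\alpha_j$, and the triple cup products, and you explicitly concede that the decisive step (excluding \emph{every} equality configuration) is not pinned down. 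As written this is a genuine gap, and it is not clear the intersection-form route closes it: the pairing lives on $H^{2}(X,\partial X)$ while your classes $\alpha_i\cup\alpha_j$ are not relative, which is precisely the difficulty you flag, and no contradiction is actually extracted from the hypothesis $2g=\binom{n}{2}$.

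The paper's resolution works one degree higher and is much shorter. If $2g=\binom{n}{2}$, then your surjection $H_2(S^4-S)\to H_2(\Z^n)=\Z^{\binom{n}{2}}$ is a surjection between free abelian groups of equal rank, hence an isomorphism; the paper deduces $\pi_2(S^4-S)=0$, so that $K(\Z^n,1)$ can be built from the complement by attaching cells of dimension greater than three, and the classifying map is then surjective on $H_3$ as well. This yields $n-1=\operatorname{rank}H_3(S^4-S)\ge\binom{n}{3}$, which forces $n\le 3$; and for $n\le 3$ the equality $2g=\binom{n}{2}$ is impossible by exactly your parity remark. So the idea you are missing is to use the equality hypothesis to promote the $H_2$-surjection to an $H_3$-surjection and then compare $\binom{n}{3}$ with $n-1$. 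You did notice the relevant tension (the smallness of $b_3(X)=n-1$ against $\binom{n}{3}$), but you deployed it in the wrong direction, as ``relations among triple cup products,'' rather than as a surjectivity constraint on third homology.
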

\begin{proof}
Let us put $g=g(S)$. By Alexander duality, 
\[ \left\{ \begin{array}{l}
H_{2}(S^{4}-S ;\Z) = \Z^{2g} \\
H_{3}(S^{4}-S ; \Z) = \Z^{n-1}.
\end{array}
\right.
\]
We can construct $K(\Z^{n}, 1)$ from $S^{4}- S$ by attaching cells whose dimensions are greater than two. Hence we have a surjection
\begin{equation}\label{eqn1}
H_{2}(S^{4}- S;\Z) \rightarrow H_{2}(\Z^{n};\Z) = \Z^{\binom{n}{2}} 
\end{equation}
so $\frac{n(n-1)}{2} \leq 2g$ holds.
\end{proof}

\begin{corollary}
\label{cor:bound}
The rank of an abelian $T^{2}$-link is at most $5$. 
\end{corollary}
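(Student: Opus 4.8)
The plan is to reduce the statement to a direct application of Theorem~\ref{theorem:bound}. The key observation is that in a $T^{2}$-link every component is a torus, hence of genus one, so if $S$ has $n$ components then its total genus is simply $g(S) = n$. Since the rank of an abelian surface link coincides with its number of components, I can feed this value of $g(S)$ straight into the inequality~(\ref{eqn:bound}).

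First I would dispose of the trivial case $n = 1$, where there is nothing to prove. For $n > 1$ I would substitute $g(S) = n$ into Theorem~\ref{theorem:bound}, obtaining $n(n-1) < 4n$. Dividing by the positive integer $n$ gives $n - 1 < 4$, that is $n < 5$, and hence $n \leq 4$, which is the asserted bound.

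I do not anticipate any genuine obstacle, since all the substantive work is already contained in Theorem~\ref{theorem:bound}. The only point needing minor attention is that the theorem is stated under the hypothesis $n > 1$, so the case $n = 1$ must be handled separately; but this case is immediate, as rank one is trivially at most four.
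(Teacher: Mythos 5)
Your proof is correct and is exactly the intended deduction: the paper leaves Corollary~\ref{cor:bound} without an explicit proof because it follows by substituting $g(S)=n$ (each torus component having genus one) into the inequality $n(n-1)<4g(S)$ of Theorem~\ref{theorem:bound}, giving $n<5$. Your handling of the $n=1$ case, excluded from the theorem's hypothesis, is a sensible and harmless extra precaution.
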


We also observe the following simple fact.

\begin{proposition}
\label{prop:group}
If $S$ is an abelian surface link, so is its sublink.
\end{proposition}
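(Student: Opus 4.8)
The plan is to realize the link group of the sublink as a quotient of the link group of $S$, so that abelianity is inherited automatically. Write the sublink as $S'$ and let $S''=S\setminus S'$ be the union of the discarded components, so that $S=S'\sqcup S''$. Since $S''\subset S^{4}-S'$, the inclusion of complements $S^{4}-S\hookrightarrow S^{4}-S'$ induces a homomorphism
\[
\iota_{*}\colon \pi_{1}(S^{4}-S)\longrightarrow \pi_{1}(S^{4}-S')
\]
of link groups. The heart of the argument is to show that $\iota_{*}$ is surjective; once this is known we are finished, because the link group of $S$ is abelian by hypothesis and any quotient of an abelian group is abelian, whence the link group of $S'$ is abelian and $S'$ is an abelian surface link.

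To prove surjectivity I would use a general position (transversality) argument. Fix a basepoint in $S^{4}-S$ and let $\gamma$ be a loop representing an arbitrary class in $\pi_{1}(S^{4}-S')$. Since $\gamma$ is $1$-dimensional, $S''$ is $2$-dimensional, and $1+2<4$, a generic perturbation of $\gamma$ inside $S^{4}-S'$ makes it disjoint from $S''$ (while it stays automatically disjoint from $S'$). The resulting loop $\gamma'$ then lies in $S^{4}-(S'\cup S'')=S^{4}-S$ and is homotopic to $\gamma$ within $S^{4}-S'$, so $[\gamma]=\iota_{*}[\gamma']$ lies in the image of $\iota_{*}$. As $[\gamma]$ was arbitrary, $\iota_{*}$ is surjective.

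The step requiring genuine care is exactly this surjectivity: one must check that the transversality can be performed rel basepoint and that the correcting homotopy may be kept inside $S^{4}-S'$ (the homotopy itself need not avoid $S''$; only the final loop must), but this is a standard application of transversality and the dimension count above. Everything else is formal. As an alternative route that also explains the geometry, I could instead invoke the van Kampen theorem for the decomposition of $S^{4}-S'$ into a piece homotopy equivalent to $S^{4}-S$ and a tubular neighborhood $N(S'')$, glued along the circle bundle $\partial N(S'')$. Because the projection-induced map $\pi_{1}(\partial N(S''))\to \pi_{1}(N(S''))\cong \pi_{1}(S'')$ is surjective, attaching $N(S'')$ only imposes relations (it kills the meridians of $S''$) without introducing new generators, which yields the surjectivity of $\iota_{*}$ and hence the same conclusion.
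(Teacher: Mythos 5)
Your proposal is correct, and its overall strategy coincides with the paper's: realize $\pi_{1}(S^{4}-S')$ as a quotient of $\pi_{1}(S^{4}-S)$ and use the fact that a quotient of an abelian group is abelian. The only difference is in how surjectivity of $\iota_{*}$ is established. The paper applies van Kampen directly to get the sharper statement $\pi_{1}(S^{4}-S')=\pi_{1}(S^{4}-S)/\langle\!\langle \mu\rangle\!\rangle$, i.e.\ the quotient is exactly by the normal closure of the meridians of the discarded components; your primary route is a general-position argument ($1+2<4$, so a loop can be pushed off the $2$-dimensional $S''$ by a small homotopy staying in the open set $S^{4}-S'$), which yields only surjectivity but is all that is needed, and is arguably more elementary. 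Your ``alternative route'' at the end is precisely the paper's argument. One cosmetic remark: the paper additionally records that $H_{1}(S^{4}-S')\cong\Z^{n-1}$ to pin down the group as free abelian of the expected rank; you may omit this since, as noted in the introduction of the paper, an abelian link group of an $n$-component surface link is automatically free abelian of rank $n$, so your conclusion already gives the proposition as stated.
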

\begin{proof}
For an $n$-component abelian surface link $S= \bigsqcup_{i=1}^{n} F_{i}$, let $N(F_{n})$ and $\mu_{n}$ be the regular neighborhood and the meridian of the $n$-th component $F_{n}$, and $S'= \bigsqcup_{i=1}^{n-1} F_i$ be a sublink of $S$. By van-Kampen's theorem, 
\[ \pi_{1}(S^{4}- S') = \pi_{1}(S^{4}-S)\slash \langle \! \langle \mu_{n} \rangle\!\rangle \]
where $\langle\!\langle \mu_{n} \rangle\!\rangle$ denotes the normal subgroup of $\pi_{1}(S^{4}-S)$ generated by $\mu_{n}$. Hence $\pi_{1}(S^{4}- S')$ is abelian if so is $\pi_{1}(S^{4}-S)$. Since $H_{1}(S^{4}-S') = \Z^{n-1}$, we conclude $\pi_{1}(S^{4}-S')= \Z^{n-1}$.   
\end{proof}

With Proposition \ref{prop:group}, Theorem \ref{theorem:bound} provides more constraints of the genus of abelian surface links.

 \begin{corollary}\label{cor:genus}
Let $S$ be an abelian surface link. For each $g\geq 0$, the number of genus $g$ components of $S$ is at most $4g+1$. 
\end{corollary}

Proposition \ref{prop:group} implies that $\pi_{1}(S^{4}-F) =\Z$ for each component $F$ of an abelian surface link $S$.
Thus according to the famous unknotting conjecture, it is expected that each component $F$ is unknotted so the link-homotopy will carry substantial information. This is why we take notice of the double and the triple linking numbers, and why we are interested in finding abelian surface links of low genus, with non-trivial double and triple linking numbers. 

It is an interesting question to ask whether the lower bound (\ref{eqn:bound}) is best-possible. The lower bound is sharp for the case $n< 4$:
\begin{enumerate}
\item Proposition \ref{prop:exist} shows that there exists a $2$-component abelian surface link of genus $1$.
\item Hopf 2-links with beads \cite{CKSS01} is a $3$-component abelian surface link of genus $2$.
\item There exists a 4-component abelian $T^{2}$-link \cite{N3}.  
\end{enumerate}

The first unknown case is $n=4$: are there $4$-component abelian surface links of genus $3$?

\section{Simple constructions of abelian surface links}\label{sec:const}

We provide a simple construction of abelian surface links of arbitrary rank.

\begin{proposition}
\label{prop:exist}
For arbitrary large $n>0$, there exists an abelian surface link of rank $n$. Indeed, there exists an abelian surface link of rank $n$, of genus $\frac{n(n-1)}{2}$.
\end{proposition}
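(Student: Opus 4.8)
The plan is to build the link by a ribbon construction, starting from the trivial $n$-component $2$-link and attaching $\binom{n}{2}$ handles, one for each pair of components, each handle routed so as to force the corresponding pair of meridians to commute.

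First I would fix $n$ trivially embedded, mutually unlinked $2$-spheres $\Sigma_1,\dots,\Sigma_n$ in $\R^4$, so that $\pi_1(\R^4-\bigsqcup_i\Sigma_i)$ is the free group $F_n=\langle x_1,\dots,x_n\rangle$ on the meridians $x_i$. For each pair $i<j$ I would attach a thin tube (a $1$-handle) with both feet on $\Sigma_i$, running along an arc in the complement whose core loop encircles $\Sigma_j$ exactly once and is unlinked from every other component. Since both feet lie on the same sphere, this operation raises the genus of the $i$-th component by one without changing the number of components, and the $\binom{n}{2}$ arcs can be made pairwise disjoint by general position in $\R^4$. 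Denote the resulting surface link by $S$.

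The genus bookkeeping is then immediate: the $i$-th component receives one handle for each $j>i$, hence has genus $n-i$, and the total genus is $\sum_{i=1}^{n}(n-i)=\frac{n(n-1)}{2}$, while the number of components stays $n$. In particular the genera realized are $0,1,\dots,n-1$, so $S$ has exactly one spherical component, in agreement with Corollary \ref{cor:genus}. Because $S$ is obtained from a trivial $2$-link purely by attaching tubes, it is a ribbon surface link, which is the feature advertised in the introduction.

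The substance of the argument is the computation of $\pi_1(\R^4-S)$, and this is the step I expect to be the main obstacle. I claim the construction yields the presentation $\langle x_1,\dots,x_n \mid [x_i,x_j]=1\ (1\le i<j\le n)\rangle$, which is $\Z^n$. The mechanism is that attaching the tube for the pair $i<j$ imposes the single relation equating $x_i$ with its parallel transport along the core of the tube; since that core encircles $\Sigma_j$ once and nothing else, the transport conjugates $x_i$ by $x_j$, giving $x_i=x_j x_i x_j^{-1}$, i.e. $[x_i,x_j]=1$. I would justify this by a van Kampen argument carried out one handle at a time, building up a regular neighborhood of $S$ from $\bigsqcup_i\Sigma_i$ tube by tube, in the same spirit as the van Kampen computation in Proposition \ref{prop:group}, checking at each stage that the handle adds exactly the stated commutator and introduces no surviving new generator. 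The delicate points are precisely verifying that each tube contributes $[x_i,x_j]=1$ and no extraneous relation or generator, and that all $\binom{n}{2}$ tubes can be routed simultaneously and disjointly (which holds by general position in $\R^4$); the rest is routine. Granting the presentation, $\pi_1(\R^4-S)=\Z^n$, and as a consistency check its abelianization equals $H_1(\R^4-S)=\Z^n$ by Alexander duality.
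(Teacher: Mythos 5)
Your construction is essentially the paper's: the paper builds the same link inductively, at each stage adjoining a new genus-$n$ component that carries one handle linking each of the $n$ existing components, and likewise concludes abelianness from the Wirtinger presentation (one commutator relation per handle). Unrolling that induction gives exactly your all-at-once picture with $\binom{n}{2}$ tubes (only the choice of which member of each pair carries the tube is swapped), so the approach, the genus count $\frac{n(n-1)}{2}$, and the ribbon property all coincide.
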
 
\begin{proof}
The case $n=1$ is obvious (take an unknotted sphere). We show that from an $n$-component abelian surface link $S$, one can construct an $(n+1)$-component abelian surface link $S^{+}$ of total genus $g(S)+n$.
 
Take a surface link diagram $D$ of $S$ and a point $z \in \R^{3} -D$. For each component $F_{i}$ of $S$, take a regular point $x_{i} \in D \subset F_{i}$. By applying Roseman moves if necessary, we may assume that all points $z,x_{1},\ldots,x_{n}$ lie on the closure of a certain component of $\R^{3}-D$, say $C$. Take mutually disjoint paths $\gamma_{i} \subset C$ that connects $z$ and $x_{i}$. 
 
We add a new genus $n$ component $F_{n+1}$ to $S$ as follows. $F_{n+1}$ is made of the sphere neighborhood of $z$ and $n$ handles contained in a neighborhood of $\gamma_{i}$.
Near $x_{i}$ each handle links to the sheet of $F_{i}$ as depicted in Figure \ref{fig:construction}.

From the Wirtinger presentation, it is confirmed that $S^{+} = S \cup F_{n+1}$ is an abelian surface link of rank $n+1$.
\end{proof}

\begin{figure}[htbp]
 \begin{center}
\includegraphics*[width=70mm]{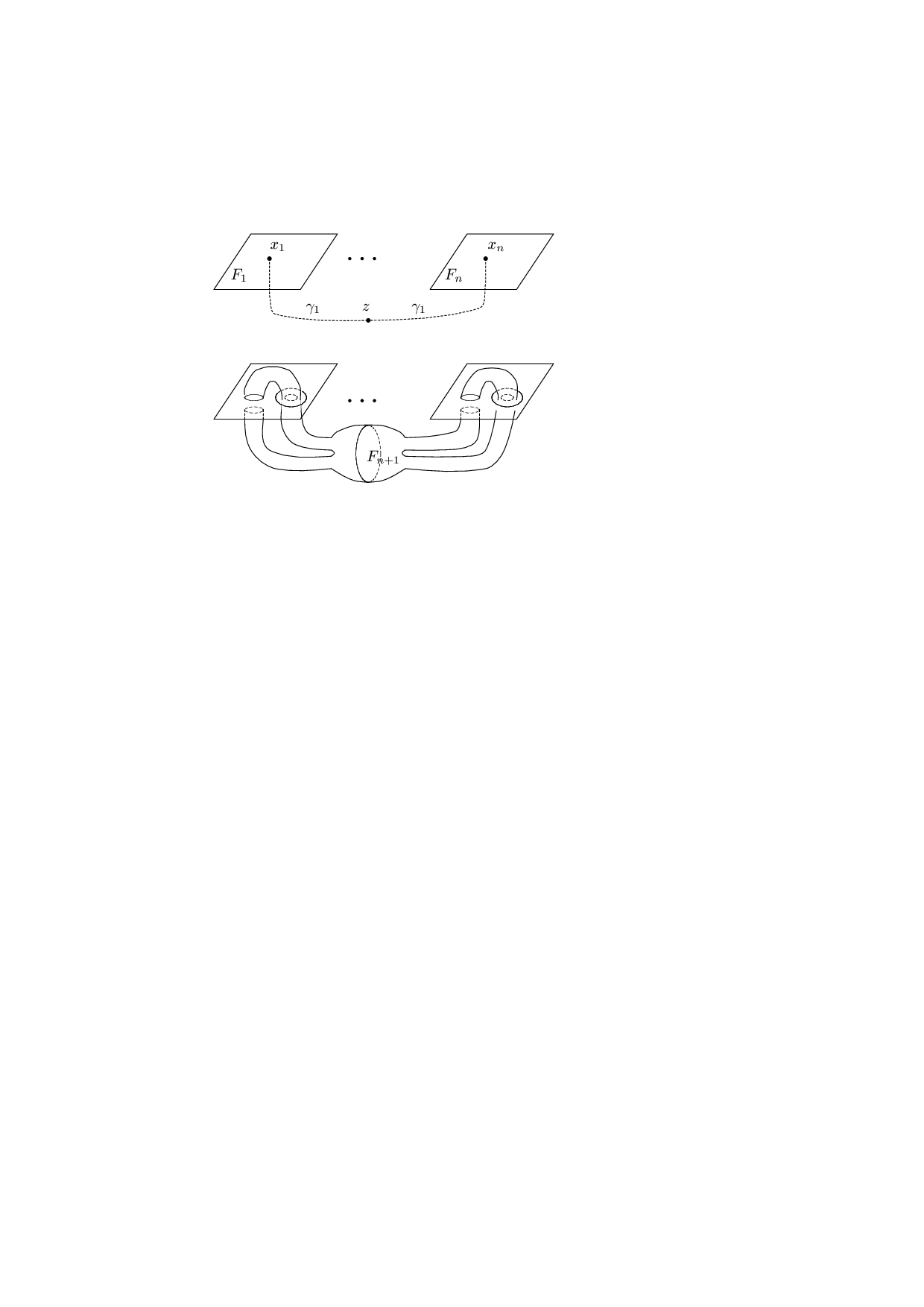}
 \caption{Construction of $S^{+}$}
 \label{fig:construction}
  \end{center}
\end{figure}

A surface link is called {\it ribbon} if it is obtained from the split union of unknotted spheres by adding a finite number of 1-handles.
In our construction above, if $S$ is ribbon, then so is $S^{+}$, hence we actually proved the following slightly stronger fact. 

\begin{corollary}
There exists a ribbon abelian surface link of rank $n$, with genus $\frac{n(n-1)}{2}$.
\end{corollary}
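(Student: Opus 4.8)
The plan is to argue by induction on the rank $n$, following the very construction used in the proof of Proposition~\ref{prop:exist} and verifying that each step can be performed within the ribbon category. The total genus $\frac{n(n-1)}{2}$ is already supplied by that construction, since the inductive step raises the rank from $n$ to $n+1$ while adjoining a genus $n$ component, and $\frac{n(n-1)}{2}+n=\frac{(n+1)n}{2}$; so the only new content is the ribbon property. For the base case $n=1$ an unknotted sphere is ribbon by definition (it is the split union of a single unknotted sphere, with no $1$-handles attached). Thus it suffices to prove the inductive claim already asserted before the statement: if $S$ is ribbon, then the surface link $S^{+}=S\cup F_{n+1}$ is again ribbon.

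To prove this, I would first fix a ribbon presentation of $S$, writing $S$ as the result of attaching $1$-handles $b_{1},\dots,b_{k}$ to a split union $\mathcal{U}$ of unknotted spheres. Recall that in the construction $F_{n+1}$ is assembled from a small unknotted sphere $\Sigma$ centred at $z$ together with $n$ handles, the $i$-th of which runs out along $\gamma_{i}$ and, near $x_{i}$, hooks the sheet of $F_{i}$. The key observation is that each such handle is a genus-increasing $1$-handle: it is a tube whose two attaching disks both lie on $\Sigma$, routed through a neighborhood of $\gamma_{i}$ so as to link $F_{i}$. Consequently $F_{n+1}$ is itself obtained from the unknotted sphere $\Sigma$ by attaching $n$ such $1$-handles, and appending $\Sigma$ to $\mathcal{U}$ exhibits $S^{+}$ as the result of attaching $b_{1},\dots,b_{k}$ together with these $n$ new tubes to the split union $\mathcal{U}\cup\Sigma$ of unknotted spheres. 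This is precisely a ribbon presentation, so $S^{+}$ is ribbon and the induction closes.

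The step requiring the most care is the verification that each genus-creating tube of $F_{n+1}$ is a bona fide embedded $1$-handle attached to the \emph{trivial} sphere $\Sigma$, even though it links the component $F_{i}$. The point is that the linking of $F_{n+1}$ with $F_{i}$ is produced entirely by how the tube is embedded in $\R^{4}$: because the surfaces sit in $\R^{4}$ with codimension two, there is room to route the annular tube so that it clasps the sheet of $F_{i}$ near $x_{i}$ while both the tube and $\Sigma$ remain embedded and $\Sigma$ stays unknotted and unlinked from $\mathcal{U}\cup S$. Since the ribbon condition constrains only the starting configuration (trivial spheres) and the moves (attaching $1$-handles), placing no restriction on how the resulting components link one another, this linking is not an obstruction, and $S^{+}$ is ribbon as claimed.
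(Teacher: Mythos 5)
Your proof is correct and follows essentially the same route as the paper, which disposes of the corollary in a single sentence by observing that the inductive construction of Proposition \ref{prop:exist} starts from an unknotted sphere and adjoins each new component $F_{n+1}$ as an unknotted sphere with $1$-handles attached, so ribbonness is preserved at every stage. You have merely spelled out the details (the base case, the bookkeeping of the ribbon presentation, and why the linking tubes are legitimate $1$-handles) that the paper leaves implicit.
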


The newly-added $(n+1)$-st component $F_{n+1}$ links to other components of $S^{+}$ in a rather trivial way, in the sense that $\Dlk_{i,n+1}=0$ and $\Tlk_{i,n+1,j}=0$ for all $i,j \in \{1,\ldots,n\}$. See Section \ref{sec4} for the double linking number $\Dlk_{i,j}$ and the triple linking number $\Tlk_{i,j,k}$. 

\section{A torus-covering $T^2$-link and its link group} \label{sec3}
 
In this section, we review torus-covering $T^2$-links and their link groups. For details, see \cite{N}. 

Let $T$ be the standardly embedded oriented torus in $\mathbb{R}^4$, the boundary of an unknotted (standardly embedded) solid torus in $\mathbb{R}^3 \times \{0\} \subset \mathbb{R}^4$. Let $N(T)$ be a tubular neighborhood of $T$, and let $p \,:\, N(T) \to T$ be the natural projection. 

\begin{df} \label{Def2-1} 
A {\it torus-covering $T^2$-link} is a 
$T^{2}$-link $S$ in $\mathbb{R}^4$ (or $S^{4}$)
such that $S$ is contained in $N(T)$ and $p |_{S} \,:\, S \to T$ is an unbranched covering map. 
\end{df}

Let $S$ be a torus-covering $T^{2}$-link.
Fix a base point $x_0 =(x'_{0},x''_{0})$ of $T = S^{1} \times S^{1}$.
Recall that $T$ is embedded as $T=\partial (D^{2} \times S^{1}) \subset \R^{3} \times \{0\} \subset \R^{4}$. Take two simple closed curves on $T$, $\mathbf{m} = \partial D^{2} \times \{x''_0\}$ and $\mathbf{l}= \{x'_{0}\}\times S^{1}$.

Let us consider the intersections $S \cap p^{-1}(\mathbf{m}) \subset \mathbf{m} \times D^{2}$ and $S \cap p^{-1}(\mathbf{l}) \subset \mathbf{l} \times D^{2}$. They are regarded as closed $m$-braids in the 3-dimensional solid tori, where $m$ is the degree of the covering map $p|_{S} \, :\, S  \rightarrow T$. Cutting open the solid tori along the 2-disk $p^{-1}(x_0)= \{x_{0}\}\times D^{2}$, we obtain two $m$-braids $a$ and $b$.
The assumption that $p|_{S}$ is an unbranched covering implies that $a$ and $b$ commute. We call the pair of commutative braids $(a,b)$ the {\it basis braids} of $S$. Conversely, starting from a pair of commutative $m$-braids $(a,b)$, one uniquely constructs a torus-covering $T^2$-link with basis braids $(a,b)$ \cite[Lemma 2.8]{N}.
For commutative $m$-braids $a$ and $b$, we denote by $\mS_{m}(a,b)$ the torus-covering $T^2$-link with basis braids $a$ and $b$. 

For the $i$-th component $F_{i}$ of $S$, fix the lift of the base point $x_{i} \in p|_{S}^{-1}(x_{0}) \subset F_{i}$. Let $p_{i} = p|_{F_{i}} : F_{i} \rightarrow T$ be the restriction of $p$. Let $\mathbf{m}_{i}$ and $\mathbf{l}_{i}$ be the connected components of $p_{i}^{-1}(\mathbf{m})$ and $\mathbf{l}_{i}=p_{i}^{-1}(\mathbf{l})$ that contains $x_{i}$, respectively. Then two curves $\{\mathbf{m}_{i}, \mathbf{l}_{i}\}$ form an oriented basis of $H_{1}(F_{i})$ which is compatible with the orientation of $F_{i}$. We say $\{\mathbf{m}_{i}, \mathbf{l}_{i}\}$ is a {\em preferred basis} of $H_{1}(F_{i})$. A preferred basis is independent of choices of a lift of base points $x_{i}$ $(i=1,\ldots,n)$.

Let $B_{n}$ be the $n$-strand braid group and $\sigma_{1},\ldots,\sigma_{n-1}$ be the standard generator of $B_{n}$.
Let $\mathbf{F}_n$ be the free group of rank $n$ generated by $x_{1},\ldots,x_{n}$ and let $\mathcal{A}: B_{n} \rightarrow \Aut(\mathbf{F}_{n})$ be the Artin representation of the braid groups, which is given by 
\begin{gather*}
\mathcal{A}^{\sigma_j}(x_i)=\begin{cases}
                        x_{i+1} & \text{if $i = j$} \\
                        x_{i}^{-1}x_{i-1}x_{i} & \text{if $i = j+1$} \\
                        x_i & \text{otherwise.} 
\end{cases}
\end{gather*}
(Here $\mathcal{A}^{a} = \mathcal{A}(a)$ for $a \in B_n$ and we use the left action of $\Aut(\mathbf{F}_{n})$ on $\mathbf{F}_{n}$. Note that the action is inverse of the one given in \cite[Chapter 27]{Kamada02} and \cite[Section 3]{N}.) As we have shown in \cite[Proposition 3.1]{N}, the link group of $\mathcal{S}_m(a,b)$ is presented by 
\begin{equation*}
\pi_{1}(S^{4}-\mathcal{S}_m(a,b)) = \left\langle \, x_1 \,, \ldots,\, x_m \mid
x_i=\mathcal{A}^{a}(x_i)=\mathcal{A}^{b}(x_i), \;\;(i=1,2,\ldots, m ) \, \right\rangle. 
\end{equation*}

\section{Double and Triple linking numbers of torus-covering $T^{2}$-links} \label{sec4}

The double linking numbers and the triple linking numbers are natural generalizations of the classical linking numbers, and are the most fundamental link-homotopy invariants of surface links. The double and triple linking numbers are complete invariants of {\em link bordism classes}: Two surface links are link bordant if and only if they have the same double and triple linking numbers \cite{CKSS01,San,San2}. In this section, we give formulae for the double and triple linking numbers of torus-covering $T^{2}$-links in terms of their basis braids.

\subsection{Double linking numbers}

Among various equivalent definitions of the double linking number, we use the following diagrammatic one \cite{CKS}.

Take a surface link diagram $D \subset \R^{3}$ of a surface link $S$ and let $F_i$ and $F_j$ be two different components of $S$. We say that a double point curve of $D$ is of type $(i,j)$ if on the double point curve $F_{i}$ appears as the over sheet and $F_{j}$ appears as the under sheet. Let $D_{ij}$ be the double point curves of type $(i,j)$, and let $D_{ij}^+$ be the immersed circles in $\mathbb{R}^3$ obtained by shifting $D_{ij}$ in the diagonal direction so that $D_{ij}$ and $D_{ij}^+$ are disjoint. Let $\widetilde{D}_{ij}$ and $\widetilde{D}_{ij}^+$ be a link in $\mathbb{R}^3$ obtained from $D_{ij}$ and $D_{ij}^+$ by a slight perturbation.

The {\em double linking number} $\Dlk_{i,j}=\Dlk_{i,j}(S) \in \mathbb{Z}\slash 2\mathbb{Z}$ is defined as the linking number modulo two,
\[
\Dlk_{i,j}(S)= \lk( \widetilde{D}_{ij}, \widetilde{D}_{ij}^+) \pmod{2}.
\]

To give formulae of the double and the triple linking numbers for torus-covering $T^2$-links, we introduce the following notations.

For a torus-covering $T^{2}$-link $\mS_{m}(a,b)$, let $\widetilde{A}_{i}$ be the components of the closed braid $\widehat{a}$ that corresponds to the $i$-th component of $\mS_{m}(a,b)$. In general, $\widetilde{A}_{i}$ is not connected. Take one of its connected components of $\widetilde{A}_{i}$ and call it $A_{i}$.
Then $\widetilde{A}_{i}$ and $A_{i}$ are regarded as an oriented link in $\R^{3}$.

We define $\lk^{a}_{i,j}$, the {\em linking number of the $i$-th component to the $j$-th component in the $a$-direction} by the classical linking number
\[ \lk_{i,j}^{a} = \lk ( A_{i}, \widetilde{A}_{j}). \]
The notations $\widetilde{B}_{i}$, $B_{i}$ and $\lk^{b}_{i,j}$ are defined similarly.
As we will see in Remark \ref{rem:independent}, $\lk^{a}_{i,j}$ does not depend on a choice of a connected component $A_{i}$.
We remark that $\lk^{a}_{i,j}$ is not always symmetric, and it might happen $\lk^{a}_{i,j} \neq \lk^{a}_{j,i}$.

We consider the special case that $b=\Delta^{2N}$, where $\Delta$ is an $m$-braid with a half twist and $N$ is an integer. 
In such case, $\lk^{\Delta^{2}}_{i,j}$ is equal to the degree of the covering $p|_{F_{j}}: F_{j} \rightarrow T$ corresponding to the $j$-th component, which will be denoted by $m_j$, so $\lk^{\Delta^{2N}}_{i,j}=N m_j$. Moreover, $\widetilde{A}_{i}$ is connected so $A_{i} = \widetilde{A}_{i}$ for all $i$, thus $\lk^{a}_{i,j}$ is symmetric. 

The following theorem gives the double linking number for torus-covering $T^2$-links of the form $\mS_{m}(a,\Delta^{2N})$.

\begin{theorem}\label{theorem:dlk}
Let $\mathcal{S}_m(a, \Delta^{2N})$ be a torus-covering $T^{2}$-link. Then the double linking number of the $i$-th and $j$-th components is given by the formula
\begin{equation*}
\Dlk_{i,j}(\mathcal{S}_m(a, \Delta^{2N}))=N( \lk_{i,j}^{a} + m_{i} + m_{j}) \mod{2}.
\end{equation*}
\end{theorem}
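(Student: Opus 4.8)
The plan is to compute the double linking number directly from its diagrammatic definition, using the structure of the surface diagram of a torus-covering $T^2$-link $\mathcal{S}_m(a,\Delta^{2N})$. First I would analyze how the double point curves of type $(i,j)$ arise. The surface $\mathcal{S}_m(a,\Delta^{2N})$ is fibered over the torus $T$ by the braid data: moving in the $\mathbf{m}$-direction realizes the braid $a$, while moving in the $\mathbf{l}$-direction realizes $\Delta^{2N}$, a full twist repeated $N$ times. Double points in the diagram $D \subset \R^3$ come from the crossings of these braids as the sheets pass over and under one another. Since the two coordinate directions on $T$ contribute independently, I would separate the double point curves into those generated by the $a$-braiding and those generated by the $\Delta^{2N}$-braiding, and track for each whether it contributes a type $(i,j)$ curve (i.e. component $i$ over component $j$).

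The key computation is then to express $\lk(\widetilde{D}_{ij}, \widetilde{D}_{ij}^+) \bmod 2$ in terms of the braid data. I would parametrize each double point curve by where it sits over $T$: a double point curve is itself a closed curve lying over one of the $1$-cycles of $T$, and its self-linking with the pushoff $\widetilde{D}_{ij}^+$ is controlled by how the curve winds as we traverse the full twist $\Delta^{2N}$. Here the factor $N$ should emerge naturally, since $\Delta^{2N}$ contributes $N$ full twists and each full twist rotates the strands once around. The term $\lk_{i,j}^a$ should count the number of crossings between the $i$-th and $j$-th components in the closed braid $\widehat{a}$ (with appropriate over/under bookkeeping giving the type-$(i,j)$ condition), and the self-interaction terms $m_i + m_j$ should arise from the self-twisting of each component's own $m_i$ (resp.\ $m_j$) strands as they are carried around by the full twist. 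Throughout I would use the special-case facts recorded before the statement: that $\lk^{\Delta^{2N}}_{i,j} = m_j$, that $\widetilde{A}_i = A_i$ is connected, and that $\lk^a_{i,j}$ is symmetric, so that the formula is well-defined and the bookkeeping simplifies.

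The main obstacle I expect is the precise accounting of which double point curves are of type $(i,j)$ versus $(j,i)$, and assembling the linking number from the twisting contributions without sign or parity errors. In particular, the self-crossing contributions of a single component (coming from $\Delta^{2N}$ acting on that component's own $m_i$ strands) must be disentangled from the genuine $i$-$j$ interactions, and it is exactly these self-contributions that should produce the $m_i + m_j$ correction terms. Because the invariant lives in $\Z/2\Z$, a careful parity analysis — rather than an exact integral linking number — suffices, which I would exploit to discard even-order contributions and thereby simplify the combinatorics. I would organize the argument by first establishing the formula for a single full twist ($N=1$), where the geometry of $\Delta^2$ is most transparent, and then observe that passing to $\Delta^{2N}$ multiplies the relevant count by $N$, yielding the stated factor.
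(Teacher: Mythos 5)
Your overall strategy is the same as the paper's: compute $\Dlk_{i,j}$ diagrammatically from the projection adapted to $N(T)\to T$, and split the double point curves of type $(i,j)$ into those swept out by crossings of $a$ and those swept out by crossings of $\Delta^{2N}$. Your account of the first family is essentially right (each type-$(i,j)$ crossing of $a$ yields one double point curve acquiring $N$ curls from the full twists, and there are $\lk^{a}_{i,j}$ such crossings, giving $N\lk^{a}_{i,j}$). However, there is a genuine error in where you locate the $m_i+m_j$ term. You attribute it to ``self-crossings'' of the $i$-th (resp.\ $j$-th) component with itself inside $\Delta^{2N}$. By definition a double point curve of type $(i,j)$ has $F_i$ as the over sheet and $F_j$ as the under sheet with $i\neq j$; crossings of a component with itself are of type $(i,i)$ and contribute nothing to $\Dlk_{i,j}$. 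So the mechanism you propose cannot produce the correction term, and following that plan you would end up with only $N\lk^{a}_{i,j}$.

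The actual source of the correction is the set of genuine $i$--$j$ crossings inside $\Delta^{2N}$: there are $Nm_im_j$ of them, and the nontrivial step (absent from your proposal) is to determine how they close up into double point curves. Sliding $a$ around the torus induces cyclic permutations of the $m_i$ sheets of $F_i$ and the $m_j$ sheets of $F_j$, so these crossings fall into orbits of length $\mathrm{lcm}(m_i,m_j)$, giving $Nm_im_j/\mathrm{lcm}$ double point curves, each realized as a $(\mathrm{lcm},1)$-curve on a standard torus whose pushoff in the diagonal direction links it $\mathrm{lcm}-1$ times. This yields a contribution $N\frac{m_im_j}{\mathrm{lcm}}(\mathrm{lcm}-1)$, which is congruent to $N(m_i+m_j)$ mod $2$ --- that congruence, not any self-crossing count, is how $m_i+m_j$ enters. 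Without the orbit analysis and the $(\mathrm{lcm},1)$-curve linking computation, the proof does not go through; reducing to $N=1$ first, as you suggest, is harmless but does not supply this missing ingredient.
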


\begin{proof}
We will actually prove 
\begin{equation*}
\label{eqn:proof}
\Dlk_{i,j}(\mathcal{S}_m(a, \Delta^{2N}))=N \left( \lk_{i,j}^{a} +\frac{m_i m_j}{lcm} (lcm-1) \right) \mod{2},
\end{equation*}
where $lcm$ denotes the least common multiple of $m_i$ and $m_j$.
Since $\frac{m_i m_j}{lcm}(lcm-1) = m_{i}+ m_{j} \pmod{2}$, this proves the theorem.

\begin{figure}
\centerline{\includegraphics{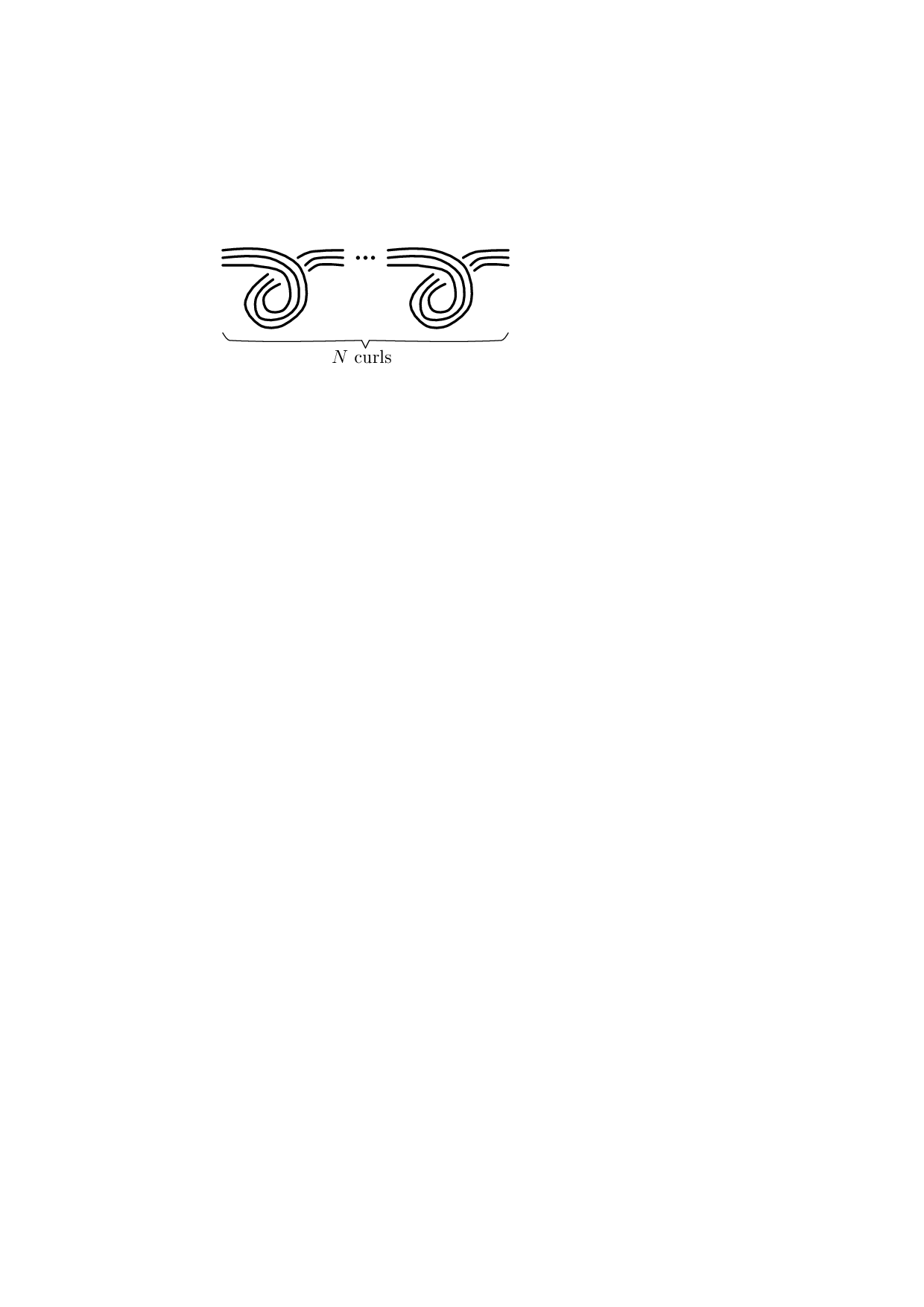}}
\caption{The braid $\Delta^{2N}$ as $N$ band-twists}
\label{fig:ncurl}
\end{figure}

Let us consider a projection $\mathbb{R}^4 \to \mathbb{R}^3$ that extends the natural projection $N(T) \cong [0,1] \times [0,1] \times T \rightarrow [0,1] \times T$, and let $D$ be the link diagram obtained from this specified projection.
Then $D$ appears as an orbit of braid diagrams transforming $a \Delta^{2N}$ to $\Delta^{2N} a$ identifying the initial and the terminal braids in an analogous way of forming a torus; see \cite[Section 2.2]{N2} for detailed arguments. For an easy treatment of double point curves, we deform the braid $\Delta^{2N}$ as $N$ band-twists (curls) described in Figure \ref{fig:ncurl}.
 
We say a crossing $c$ in the braid $a$ is of type $(i,j)$ if it is a crossing between two links $\widetilde{A_{i}}$ and $\widetilde{A_{j}}$ such that $\widetilde{A_{i}}$ appears as an over-arc. For the braid $\Delta^{2N}$, a crossing of type $(i,j)$ is defined similarly.
Then the double point curves of $D$ appear as the orbit of crossings of type $(i,j)$. We say a double point curve is of type A (resp. B) if it is an orbit of crossings of type $(i,j)$ in the braid $a$ (resp. $\Delta^{2N}$). 

First we analyze double point curves of type A. We have deformed $\Delta^{2N}$ as $N$ band-twists described in Figure \ref{fig:ncurl} so each crossing of type $(i,j)$ in $a$ forms one double point curve which is a simple closed curve with $N$ curls, when the braid $a$ slides along $\Delta^{2N}$.
The diagonal direction of the double point curve agrees with the blackboard framing, so each double point curve of type A contributes to the double linking number $\Dlk_{i,j}$ by $N$ (see Figure \ref{fig:doublecurve} (A)). The number of crossings of type $(i,j)$ in $a$ is $\lk_{i,j}^{a}$, so the double point curves of type A contribute to $\Dlk_{i,j}$ by $N \lk_{i,j}^{a} \pmod{2}$. 

To treat double point curves of Type B, we look at the crossings of type $(i,j)$ in $\Delta^{2N}$. Since $\widetilde{B}_{i}$ is an $(m_{i},Nm_{i})$-torus link, the number of crossings of type $(i,j)$ is $N m_i m_j$.
For each $k$ and $l$ $(1\leq k, l \leq m)$, there are $N$ crossings of type $(i,j)$ formed by the $k$-th component of $\widetilde{B}_{i}$ and the $l$-th component of $\widetilde{B}_{j}$. We denote these crossings by $c_{kl}^{n}$ $(n=1,\ldots,N)$. 

The braid $a$ induces the cyclic permutation of $m_{i}$ (resp. $m_{j}$) elements consisting of the connected components of $\widetilde{B}_{i}$ (resp. $\widetilde{B}_{j}$). We denote these cyclic permutations by $\sigma$ and $\tau$, respectively.
 
When the braid $a$ slides through $\Delta^{2N}$, the crossing $c^{n}_{kl}$ moves to $c^{n}_{\sigma(k) \tau (l)}$ for each $n$. Since $\sigma$ and $\tau$ are cyclic permutations, each double point curve of type B rounds $lcm$ times (see Figure \ref{fig:doublecurve} (B--1)) and the number of the double point curves of type $B$ is $N m_{i}m_{j}/lcm$.
 
By slight perturbation, each double point curve of type $B$ is modified as the $(lcm, 1)$-curve on a standardly embedded torus so that the diagonal direction agrees with the outward-normal direction of the torus (see Figure \ref{fig:doublecurve} (B--2)).
Hence each component of the double point curves of type B contributes to $\Dlk_{i,j}$ by $(lcm-1)$. 

Since there are $Nm_i m_j/lcm$ double point curves of type B, we conclude that the double point curves of type B contribute to $\Dlk_{i,j}$ by $N m_{i}m_{j}(lcm-1)/lcm$.

\begin{figure}
\centerline{\includegraphics{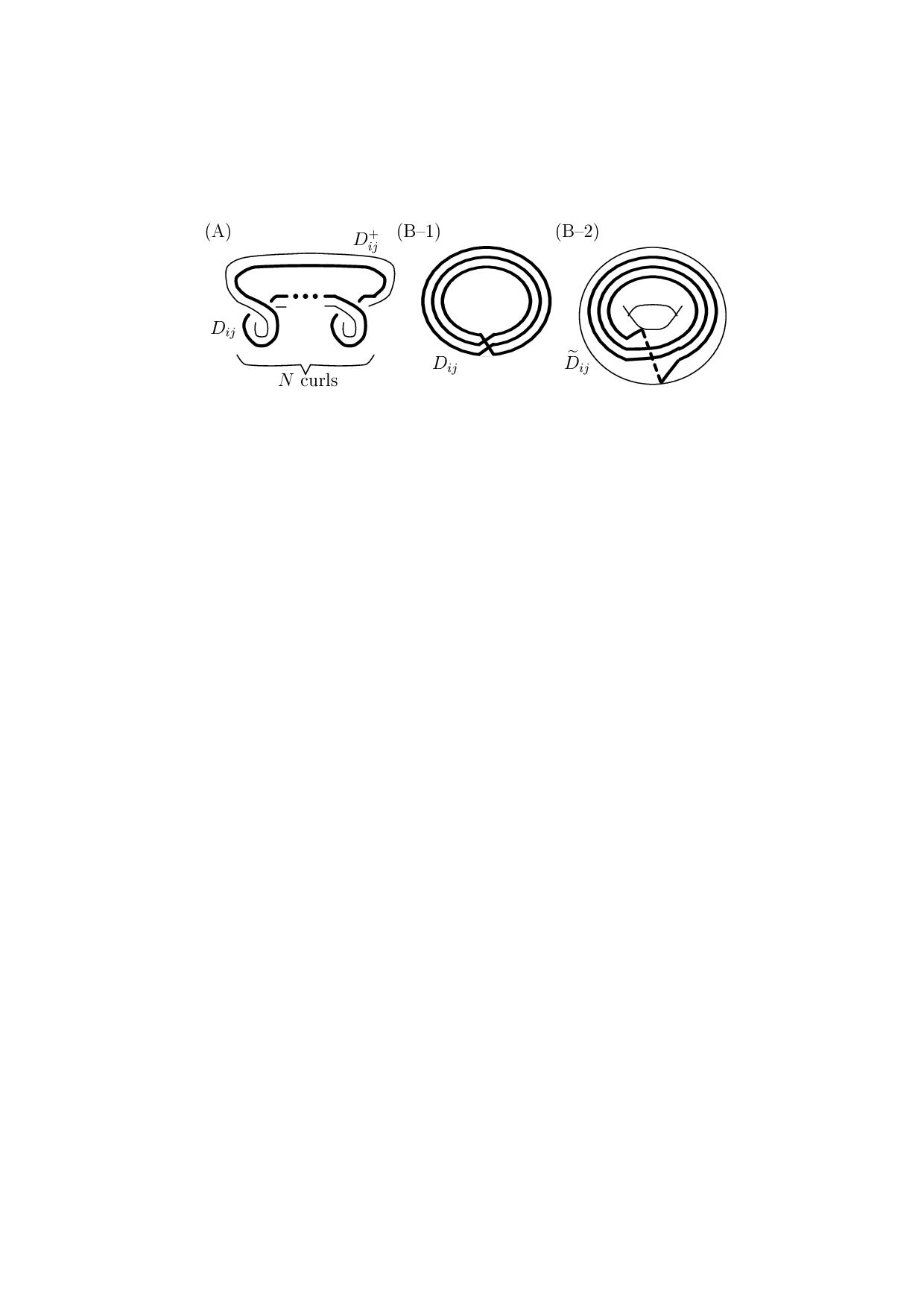}}
\caption{Double point curves}
\label{fig:doublecurve}
\end{figure}
\end{proof}

\begin{rem}
A similar argument shows that 
\[ \Dlk_{i,j}(\mS_{m}(a,a^{N})) = N \pmod{2}. \]
\end{rem}

\subsection{Triple linking numbers}

In contrast with our treatment of the double linking number, 
we use the following algebraic-topological definition of the triple linking number.

For three distinct components $F_{i}, F_{j}$ and $F_{k}$ of a surface link $S$, the triple linking number $\Tlk_{i,j,k} \in \Z$ is defined as the framed intersection 
\[ \Tlk_{i,j,k} = M_i \cdot F_j \cdot M_k \in \pi_{4}(S^{4})=H_{4}(S^{4})=\Z \]
 where $M_x$ $(x=i,k)$ is a Seifert hypersurface of $F_{x}$.
Thus $\Tlk_{i,j,k}$ is the algebraic intersection number of two curves on $F_{j}$, $F_{j} \cap M_i$ and $F_{j} \cap M_k$. 
Here we remark that in a similar manner, we are able to define the triple linking number for the case $F_{i}=F_{k}$, but in this case $\Tlk_{i,j,k}=0$. Hence we always treat triple linking numbers of mutually distinct three components.
 
\begin{proposition}
\label{prop:tlkform}
Let $\iota: F_{j} \hookrightarrow S^{4}-(F_{i} \cup F_{k})$ be the inclusion map.
If $F_{j}$ is a torus, then 
\[ \Tlk_{i,j,k} = \mathrm{det}( \iota_{*}: H_{1}(F_{j}) \rightarrow H_{1}(S^{4}-(F_{i} \cup F_{k})) ).\]
\end{proposition}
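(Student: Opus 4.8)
The plan is to compute the matrix of $\iota_*$ explicitly in convenient bases and to recognize its determinant as the intersection number of the two curves $F_j \cap M_i$ and $F_j \cap M_k$ on the torus $F_j$, which is $\Tlk_{i,j,k}$ by the definition recalled above.

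First I would pin down the target group. Since $F_i$ and $F_k$ are disjoint closed surfaces, Alexander duality gives $H_1(S^4 - (F_i \cup F_k)) \cong \widetilde{H}^2(F_i \cup F_k) \cong \Z^2$, freely generated by the meridians $\mu_i$ and $\mu_k$, the isomorphism being realized by linking numbers: a $1$-cycle $z$ in the complement has class $\lk(z, F_i)\mu_i + \lk(z, F_k)\mu_k$. Hence for a loop $\gamma$ on $F_j$,
\[
\iota_*[\gamma] = \lk(\gamma, F_i)\,\mu_i + \lk(\gamma, F_k)\,\mu_k .
\]
Because $F_j$ is a torus, $H_1(F_j) \cong \Z^2$, and fixing a symplectic basis $\{a, b\}$ with $a \cdot b = 1$ (compatible with the orientation used to define $\Tlk$), the homomorphism $\iota_*$ is represented by the $2 \times 2$ matrix with columns $\iota_*(a)$ and $\iota_*(b)$ expressed in $\{\mu_i, \mu_k\}$.

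The key geometric step is to reinterpret these linking numbers as intersection numbers on $F_j$. Writing $\alpha = [F_j \cap M_i]$ and $\beta = [F_j \cap M_k]$ in $H_1(F_j)$, I would use the Seifert-hypersurface description $\lk(\gamma, F_i) = \gamma \cdot M_i$, the algebraic intersection number taken in $S^4$. Since $\gamma \subset F_j$ and $F_j$ can be arranged transverse to $M_i$, every intersection point of $\gamma$ with $M_i$ lies on the curve $F_j \cap M_i$, and a comparison of the tangent spaces $T_x\gamma \subset T_x F_j$, $T_x(F_j \cap M_i) = T_xF_j \cap T_xM_i$, and $T_x\gamma \oplus T_xM_i = T_xS^4$ shows that $\gamma$ meets $F_j \cap M_i$ transversally inside $F_j$ with matching signs; thus $\lk(\gamma, F_i) = \gamma \cdot \alpha$ computed on $F_j$, and likewise $\lk(\gamma, F_k) = \gamma \cdot \beta$. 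This is the step I expect to be the main obstacle: one must check carefully that restricting the $S^4$-intersection $\gamma \cdot M_i$ to the surface $F_j$ reproduces exactly the $F_j$-intersection $\gamma \cdot \alpha$ with the right orientation conventions, and also that the class $\alpha$ is independent of the chosen Seifert hypersurface $M_i$ (two choices differ by a $3$-cycle that bounds a $4$-chain $W$ in $S^4$, so their intersections with $F_j$ differ by $\partial(F_j \cap W)$, a boundary in $F_j$).

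With this identification the matrix of $\iota_*$ becomes $\begin{pmatrix} a \cdot \alpha & b \cdot \alpha \\ a \cdot \beta & b \cdot \beta \end{pmatrix}$, so
\[
\det(\iota_*) = (a \cdot \alpha)(b \cdot \beta) - (b \cdot \alpha)(a \cdot \beta).
\]
To finish I would invoke the elementary identity for the symplectic intersection form on the torus, namely $x \cdot y = (a\cdot x)(b\cdot y) - (b\cdot x)(a\cdot y)$ for all $x, y \in H_1(F_j)$, which is immediate upon expanding $x$ and $y$ in $\{a, b\}$ and using $a \cdot a = b \cdot b = 0$ and $a \cdot b = 1$. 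Applying it with $x = \alpha$ and $y = \beta$ yields $\det(\iota_*) = \alpha \cdot \beta$. Since $\Tlk_{i,j,k}$ is by definition the intersection number of $F_j \cap M_i$ and $F_j \cap M_k$ on $F_j$, that is $\alpha \cdot \beta$, this completes the argument.
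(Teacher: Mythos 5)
Your proof is correct and is essentially the paper's argument: both rest on the Alexander-duality fact that the class of $F_j\cap M_x$ on $F_j$ is detected by linking numbers with $F_x$ (equivalently, $[F_j\cap M_x]=\iota_*([F_x])$ under $H_2(S^4-F_j)\cong H^1(F_j)$), together with the observation that the intersection pairing on the torus is computed by a determinant. The only cosmetic difference is that the paper phrases this via the dual map $H_2(F_i\sqcup F_k)\to H_2(S^4-F_j)$ and then transposes, whereas you write out the matrix of $\iota_*$ on $H_1$ directly in the bases $\{a,b\}$ and $\{\mu_i,\mu_k\}$.
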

\begin{proof}
For $x=i,k$, the cohomology class $[F_{j} \cap M_x] \in H^{1}(F_{j})$ represented by the curve $F_{j} \cap M_{x}$ is given by 
\[ [F_{j} \cap M_x] = \iota_{*}([F_{x}]) \in H_{2}(S^{4}-F_{j}) \cong H^{1}(F_{j}). \]
Since the algebraic intersection number of curves on a torus is given by the determinant, 
\[ \Tlk_{i,j,k} = \mathrm{det}( \iota_{*}: H_{2}(F_{i} \sqcup F_{k}) \rightarrow H_{2}(S^{4}-F_{j}) ). \] 
Since $H_{2}(F_{i} \sqcup F_{k}) \cong H^{1}(S^{4}-(F_{i}\cup F_{k}))=\textrm{Hom}(H_{1}(S^{4}- (F_{i}\cup F_{k})),\Z)$ and $H_{2}(S^4-F_j) \cong H^{1}(F_j)=\textrm{Hom}(H_{1}(F_j),\Z)$, we conclude 
\[ \Tlk_{i,j,k} = \mathrm{det} ( \iota_{*}: H_{1}(F_{j}) \rightarrow H_{1}(S^{4}-(F_{i}\cup F_{k}))  ). \]
\end{proof}

This leads to a simple formula of the triple linking numbers for a torus covering $T^{2}$-link $\mS_{m}(a,b)$.

\begin{theorem}\label{theorem:tlk}
For a torus-covering $T^{2}$-link, the triple linking number of the $i$-th, $j$-th and $k$-th components ($i,j,k$ are distinct) are given by the formula
\[ \Tlk_{i,j,k}(\mS_{m}(a,b)) = \lk^{a}_{j,i}\lk^{b}_{j,k} -  \lk^{a}_{j,k}\lk^{b}_{j,i}.\]
\end{theorem}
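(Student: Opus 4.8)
The plan is to feed Proposition~\ref{prop:tlkform} with an explicit computation of the induced map $\iota_*\colon H_1(F_j)\to H_1(S^4-(F_i\cup F_k))$ in natural bases. Since $F_j$ is a torus, I take the preferred basis $\{\mathbf{m}_j,\mathbf{l}_j\}$ of $H_1(F_j)$ from Section~\ref{sec3}. On the target, removing the two components $F_i,F_k$ gives $H_1(S^4-(F_i\cup F_k))\cong\Z^2$, freely generated by the meridians $\mu_i,\mu_k$. In these bases the coefficients of $\iota_*([\gamma])$ are exactly the linking numbers of the curve $\gamma$ with $F_i$ and $F_k$ in $S^4$; that is, $\iota_*(\mathbf{m}_j)=\lk(\mathbf{m}_j,F_i)\,\mu_i+\lk(\mathbf{m}_j,F_k)\,\mu_k$ and likewise for $\mathbf{l}_j$. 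Once these four linking numbers are identified, the determinant in Proposition~\ref{prop:tlkform} yields the stated formula at once.

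The heart of the argument is to show
\[
\lk(\mathbf{m}_j,F_i)=\lk^{a}_{j,i},\qquad \lk(\mathbf{l}_j,F_i)=\lk^{b}_{j,i},
\]
together with the two analogues obtained by replacing $i$ with $k$. These say that the four-dimensional linking of a preferred basis curve with a surface component collapses onto the three-dimensional linking number recorded by the basis braids. I would establish them by localizing to a braid-closure slice. Concretely, I expect there to be a three-sphere $\Sigma_{\mathbf{m}}\subset S^4$ that contains the solid torus $p^{-1}(\mathbf{m})$ as a standardly embedded solid torus and meets $S$ exactly in the closed braid $\widehat{a}=S\cap p^{-1}(\mathbf{m})$. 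Granting this, $\mathbf{m}_j$ is the connected component $A_j$ of $\widehat a$ through the base point, and a Seifert surface $\Sigma_j$ for $A_j$ taken inside $\Sigma_{\mathbf{m}}\cong S^3$ still bounds $\mathbf{m}_j$ in $S^4$. Since $\Sigma_j\subset\Sigma_{\mathbf{m}}$ and $F_i\cap\Sigma_{\mathbf{m}}=\widetilde{A}_i$, every intersection of $\Sigma_j$ with $F_i$ occurs inside $\Sigma_{\mathbf{m}}$, so the four-dimensional count reduces to the three-dimensional one:
\[
\lk(\mathbf{m}_j,F_i)=\Sigma_j\cdot_{S^4}F_i=\Sigma_j\cdot_{\Sigma_{\mathbf{m}}}\widetilde{A}_i=\lk(A_j,\widetilde{A}_i)=\lk^{a}_{j,i}.
\]
The slice for $p^{-1}(\mathbf{l})$ handles $\mathbf{l}_j$ and the braid $b$ in the same way, and Remark~\ref{rem:independent} ensures the answer is independent of the chosen component $A_j$, so $\iota_*$ is well defined on $[\mathbf{m}_j]$ and $[\mathbf{l}_j]$.

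Assembling the pieces, in the bases $\{\mathbf{m}_j,\mathbf{l}_j\}$ and $\{\mu_i,\mu_k\}$ the map $\iota_*$ has matrix
\[
\begin{pmatrix}
\lk^{a}_{j,i} & \lk^{b}_{j,i}\\
\lk^{a}_{j,k} & \lk^{b}_{j,k}
\end{pmatrix},
\]
and Proposition~\ref{prop:tlkform} gives $\Tlk_{i,j,k}(\mS_{m}(a,b))=\lk^{a}_{j,i}\lk^{b}_{j,k}-\lk^{a}_{j,k}\lk^{b}_{j,i}$, as claimed.

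I expect the main obstacle to be the middle step: making the localization rigorous, in particular establishing the three-sphere slice $\Sigma_{\mathbf{m}}$ (equivalently, producing a capping $2$-chain that meets $F_i$ transversally only along $\widetilde{A}_i$), and then tracking orientations carefully enough that the three identifications hold with the correct signs rather than merely up to sign. Making the sign conventions consistent across the two slices $p^{-1}(\mathbf{m})$ and $p^{-1}(\mathbf{l})$ is precisely what pins down the determinant, and hence the subtraction in the formula, rather than its absolute value.
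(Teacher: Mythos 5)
Your proposal is correct and follows essentially the same route as the paper: both reduce to Proposition~\ref{prop:tlkform}, compute $\iota_{*}$ on the preferred basis $\{\mathbf{m}_{j},\mathbf{l}_{j}\}$ by identifying $\mathbf{m}_{j}$ with the closed-braid component $A_{j}$ in the slice $S\cap p^{-1}(\mathbf{m})$ (and likewise for $\mathbf{l}_{j}$), and take the determinant of the resulting matrix of linking numbers. The localization step you flag as the main obstacle is exactly the point the paper asserts tersely ("This concludes $\iota_{*}(\mathbf{m}_{j})=\lk^{a}_{j,i}[\mu_{i}]+\lk^{a}_{j,k}[\mu_{k}]$"), and your three-sphere slice argument is a legitimate way to justify it; only note that invoking Remark~\ref{rem:independent} for independence of the choice of $A_{j}$ is slightly circular, though unnecessary since all components of $p_{j}^{-1}(\mathbf{m})$ are parallel, hence homologous, on $F_{j}$.
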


\begin{proof}
Let $\iota:F_{j} \hookrightarrow S^{4}-(F_{i} \cup F_{k})$ be the inclusion map.
The first homology group $H_{1}(S^{4}- (F_{i} \cup F_{k}) )$ is generated by the meridians of $F_{i}$ and $F_{k}$ which will be denoted by $\mu_{i}$ and $\mu_{k}$. Let $\{\mathbf{m}_{j},\mathbf{l}_{j}\}$ be a preferred basis of $H_{1}(F_{j})$.
Recall that the closed braid $\widehat{a}$ appears as $S \cap p^{-1}(\mathbf{m}) \subset \mathbf{m} \times D^{2}$, where $S=\mathcal{S}_m(a,b)$. So $\widetilde{A}_{i}$ is nothing but $F_{i} \cap p^{-1}(\mathbf{m}) \subset \mathbf{m} \times D^{2}$.

Now choose $A_{j}$ as the component of $\widehat{a}$ that contains $x_{j}$ (here $x_{j}$ is a fixed lift of the base point $x_{0}$ as we defined in Section \ref{sec3}). By definition, $A_{j}$ is regarded as an curve on $F_{j}$.

On the other hand, a preferred basis $\mathbf{m}_{j}$ was defined as a connected component of $p|_{F_{j}}^{-1} (\mathbf{m}) \subset F_{j}$ that contains the lift of the base point $x_{j} \in F_{i}$. Hence as a curve on $F_{j}$, $A_{j} = \mathbf{m}_{j} \in H_{1}(F_{j})$.

This concludes 
\[
\iota_{*}(\mathbf{m}_{j}) = \lk^{a}_{j,i} [\mu_{i}] + \lk^{a}_{j,k} [\mu_{k}]. 
\]
By similar arguments for $\mathbf{l}$, we get
\[ \iota_{*}(\mathbf{l}_{j}) = \lk^{b}_{j,i} [\mu_{i}] + \lk^{b}_{j,k} [\mu_{k}]. \]
By Proposition \ref{prop:tlkform} we get the desired formula.
\end{proof}

\begin{rem}\label{rem:independent}
The proof of Theorem \ref{theorem:tlk} implies that 
$\lk^{a}_{j,i}$ (and $\lk^{b}_{j,i}$) is determined by the $j$-th peripheral subgroup, that is,
the image of $\iota_{*}: \pi_{1}(F_{j}) \rightarrow \pi_{1}(S^{4}-S)$, where $\iota: F_{j} \hookrightarrow S^{4}-S$ is the inclusion map.
In fact, the observation that $A_{j} = \mathbf{m}_{j} \in H_{1}(F_{j})$ implies that 
\[ \iota_{*}(\mathbf{m}_{j}) = \sum_{i=1}^{n} \lk_{j,i}^a[\mu_{i}] \in H_{1}(S^{4}-S), \]
where $[\mu_{i}]$ denotes the meridian of $F_{i}$ and $\lk^{a}_{j,j}$ is treated as zero. In particular, $\lk^{a}_{j,i}$ does not depend on a choice of the connected component $A_{j}$.
\end{rem}

\begin{corollary}
\label{cor:tlk}
We have 
\[\Tlk_{i,j,k}(\mathcal{S}_m(a,\Delta^{2N})) = N (m_{k}\lk^{a}_{j,i} -  m_{i}\lk^{a}_{j,k}) \]
where $m_{i}$ denotes the degree of the unbranched covering $p|_{F_{i}}: F_{i} \rightarrow T$ corresponding to the $i$-th component.
\end{corollary}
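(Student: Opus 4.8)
The plan is to deduce this directly from Theorem~\ref{theorem:tlk} by specializing the second basis braid to $b=\Delta^{2N}$, so that the only substantive task is to evaluate the two linking numbers $\lk^{\Delta^{2N}}_{j,i}$ and $\lk^{\Delta^{2N}}_{j,k}$. Theorem~\ref{theorem:tlk} gives
\[ \Tlk_{i,j,k}(\mS_m(a,\Delta^{2N})) = \lk^a_{j,i}\,\lk^{\Delta^{2N}}_{j,k} - \lk^a_{j,k}\,\lk^{\Delta^{2N}}_{j,i}, \]
and I claim that $\lk^{\Delta^{2N}}_{j,i}=Nm_i$ for any two distinct indices; granting this, substitution yields $\Tlk_{i,j,k}=N(m_k\lk^a_{j,i}-m_i\lk^a_{j,k})$, as required.

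To compute $\lk^{\Delta^{2N}}_{j,i}=\lk(B_j,\widetilde B_i)$ I would first observe that $\Delta^2$ generates the center of $B_m$ and induces the trivial permutation, so $\Delta^{2N}$ is a pure braid. Hence the partition of $\mS_m(a,\Delta^{2N})$ into components is governed solely by the orbits of the permutation of $a$, the $i$-th orbit having $m_i$ strands, and the restriction $\widetilde B_i$ of $\widehat{\Delta^{2N}}$ to the $i$-th component splits into $m_i$ separate closed strands; I take $B_j$ to be a single such strand of the $j$-th component. The key geometric input is that each factor $\Delta^2$ is a $2\pi$ rotation contributing $+1$ to the linking number of every pair of distinct strands, so $N$ full twists link any two distinct strands exactly $N$ times. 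Therefore $B_j$ links each of the $m_i$ strands constituting $\widetilde B_i$ with linking number $N$, giving $\lk(B_j,\widetilde B_i)=Nm_i$, and likewise $\lk^{\Delta^{2N}}_{j,k}=Nm_k$; this is just the identification of $\widetilde B_i$ as the $(m_i,Nm_i)$-torus link already used in the proof of Theorem~\ref{theorem:dlk}.

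There is essentially no obstacle here, since the statement is a direct corollary once Theorem~\ref{theorem:tlk} is available. The one point requiring care is the linking count $\lk^{\Delta^{2N}}_{j,i}=Nm_i$: one must keep track of the full factor $N$ coming from the $N$ full twists rather than reading off the bare degree $m_i$, and one must use that $\Delta^{2N}$ is pure so that $B_j$ is genuinely a single closed strand linking every strand of $\widetilde B_i$. With that count established, the formula follows by substitution into Theorem~\ref{theorem:tlk}.
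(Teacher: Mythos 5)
Your proposal is correct and is exactly the intended argument: substitute $b=\Delta^{2N}$ into Theorem~\ref{theorem:tlk} and evaluate $\lk^{\Delta^{2N}}_{j,i}$, which the paper leaves implicit since it states the corollary without proof. Your value $\lk^{\Delta^{2N}}_{j,i}=Nm_i$ (each full twist contributing $+1$ to the linking of every pair of strands, summed over the $m_i$ strands of $\widetilde{B}_i$) is the correct one and is what makes the factor $N$ appear; note that the paper's prose in Section~\ref{sec4} asserts $\lk^{\Delta^{2N}}_{i,j}=m_j$ with the factor $N$ apparently dropped, so your careful bookkeeping here actually fixes a small inconsistency in the source.
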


\begin{rem}
Theorem \ref{theorem:tlk} is a generalization of \cite[Theorem 1.1]{N2}, where both $a$ and $b$ are assumed to be pure braids. Along the same lines of the proof of \cite[Theorem 1.1]{N2} and Theorem \ref{theorem:dlk}, a similar diagrammatic argument that regards the triple linking number as the algebraic count of triple points of type $(i,j,k)$ provides an alternative proof of Theorem \ref{theorem:tlk}; we remark that since we adopt a different definition of the triple linking number, the sign of each triple linking number would be reversed. 
\end{rem}

\section{Examples of abelian surface links} \label{sec5}

\subsection{Rank four examples}

Let 
\[ H_{m} = \langle a_{1},\ldots, a_{m} \: | \: a_{i}(a_{1}a_{2}\cdots a_{m}) = (a_{1}a_{2}\cdots a_{m}) a_{i}\; (i=1,\ldots, m) \rangle \]
 be the link group of the $(m,m)$-torus link, and let $\pi:\mathbf{F}_{m}=\langle x_{1},\ldots,x_{m}\rangle \rightarrow H_{m}$ be the surjection defined by $\pi(x_{i}) = a_{i}$. 

Since $\mathcal{A}^{a}(x_{1}x_{2}\cdots x_{m}) = x_{1}x_{2}\cdots x_{m}$ for any $m$-braid $a$, $\mathcal{A}^{a}$ induces an automorphism of $\pi(\mathbf{F}_m)=H_m$, $\tmA^{a}: H_m \rightarrow H_m$.
Thus the link group of $\mathcal{S}_{m}(a,\Delta^{2})$ is expressed as the quotient of $H_{m}$ by the set of the relations 
\[ 
\pi_1(\mathbb{R}^4-\mathcal{S}_m(a,\Delta^{2}))= H_{m}\slash \langle a_{i} = \tmA^{a}(a_{i}) \rangle. \]

This makes some calculations simple.
For example, for the full-twist of first $(m-1)$-strands $\Delta'^{2} = (\sigma_{1}\sigma_{2}\cdots \sigma_{m-2})^{m-1} \in B_{m}$ we have 
\begin{equation}
\label{eqn:reduce}
 \tmA^{\Delta'^{2}}(a_{i}) = a_{m}a_{i}a_{m}^{-1}. 
\end{equation}

For $k,l \in \Z_{\geq 1}$ and $\be =(e_{1},e_{2},e_{3}) \in \{\pm 1\}^{3}$, 
let us define $X_{k,l,\be}, Y_{k,l,\be} \in B_{k+l+2}$ and $Z_{k,\be} \in B_{k+3}$ as
\[
\left\{
\begin{array}{l} X_{k,l,\be}= \sigma_{1}^{2 e_{1}} (\sigma_{2}\sigma_{3}\cdots\sigma_{k}) \sigma_{k+1}^{2e_{2}} (\sigma_{k+2}\sigma_{k+3}\cdots\sigma_{k+l}) \sigma_{k+l+1}^{2e_{3}}\\
Y_{k,l,\be}
 = (\sigma_{1}\sigma_{2}\cdots\sigma_{k+l})^{(k+l+1) e_{1}} (\sigma_{k+l+1}\sigma_{k+l} \cdots \sigma_{k+3})\sigma_{k+2}^{2e_{2}} (\sigma_{k+1}\sigma_{k-1}\cdots \sigma_{3}) \sigma_{2}^{2e_{3}}\\
 Z_{k,\be} = (\sigma_{1}\sigma_{2}\cdots\sigma_{k+1})^{(k+2) e_{1}} (\sigma_{2}\sigma_{3}\cdots\sigma_{k+2})^{(k+2) e_{2}} (\sigma_{k+2}\sigma_{k+1}\cdots \sigma_{4}) \sigma_{3} ^{2 e_{3}}.
\end{array}
\right.
\]

\begin{figure}[htbp]
\centerline{\includegraphics[width=110mm]{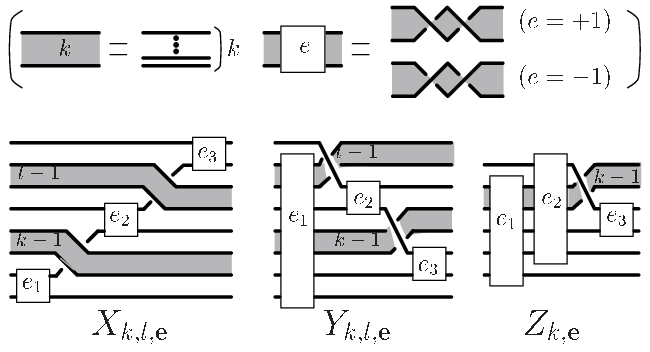}}
\caption{Braids $X_{k,l,\be}$, $Y_{k,l,\be}$, and $Z_{k,\be}$}
\label{fig:braid}
\end{figure}

\begin{theorem}
\label{theorem:T4}
Torus-covering links $\mS_{k+l+2}(X_{k,l,\be},\Delta^{2})$, $\mS_{k+l+2}( Y_{k,l,\be},\Delta^{2})$, and $\mS_{k+3}(Z_{k,\be},\Delta^{2})$ are abelian $T^{2}$-links. Their double and triple linking numbers are given by Table \ref{table:tlk}.
\end{theorem}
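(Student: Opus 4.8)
The plan is to handle the three families uniformly, since each link has the form $\mS_m(a,\Delta^2)$ and we may use the presentation $\pi_1(\R^4-\mS_m(a,\Delta^2))=H_m\slash\langle a_i=\tmA^a(a_i)\rangle$ established above, together with Theorem \ref{theorem:dlk} and Corollary \ref{cor:tlk} applied with $N=1$. First I would read off the component structure. In every case the even powers $\sigma_i^{2e}$ and the full-twist factors are pure braids, so the underlying permutation is carried entirely by the shift factors $\sigma_p\sigma_{p+1}\cdots\sigma_q$, each of which is a single cycle on its block of strands. This shows at once that each of $\mS_{k+l+2}(X_{k,l,\be},\Delta^2)$, $\mS_{k+l+2}(Y_{k,l,\be},\Delta^2)$ and $\mS_{k+3}(Z_{k,\be},\Delta^2)$ has exactly four components, and pins down the degree vector $(m_1,m_2,m_3,m_4)$ to be fed into the formulae (for instance $(1,k,l,1)$ for $X_{k,l,\be}$).

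Second, to prove abelian-ness I would compute $\tmA^a$ on the generators $a_1,\dots,a_m$ of $H_m$ and show that imposing the relations $a_i=\tmA^a(a_i)$ collapses $H_m$ onto $\Z^4$. Here the three kinds of factors act transparently: an even power $\sigma_i^{2e}$ acts by conjugation by a power of $a_ia_{i+1}$; a full-twist factor on a block of strands acts by conjugation by the product of the generators in that block, which, using the centrality of $\omega=a_1a_2\cdots a_m$ in $H_m$, reduces to conjugation by a short word (this is precisely the content of (\ref{eqn:reduce})); and a shift $\sigma_p\cdots\sigma_q$ permutes the generators of a single component cyclically, up to conjugation. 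The mechanism is then that the shift factors relate the generators inside each component, while the even powers and full twists force the four component-meridians to commute; propagating these relations through the central element $\omega$ should show that every pair $a_p,a_q$ commutes, so that the quotient is free abelian on the four meridians, that is, $\Z^4$.

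Third, I would compute the linking numbers entering the formulae. Since $b=\Delta^2$ is pure, each $\widetilde A_i$ is connected, so $\lk^a_{i,j}=\lk(\widetilde A_i,\widetilde A_j)$ is the symmetric classical linking number of two sublinks of the closed braid $\widehat a$ (Remark \ref{rem:independent}), and hence equals half the signed count of crossings between them. This makes the computation a bookkeeping exercise: a factor $\sigma_p^{2e}$ joining a strand of the $i$-th component to a strand of the $j$-th contributes $e$ to $\lk^a_{i,j}$; a full-twist factor on a block contributes $e$ times the product of the numbers of $i$-th and $j$-th strands lying in that block; and the shift factors lie inside single components, so they contribute nothing to $\lk^a_{i,j}$ for $i\neq j$. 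Substituting the resulting values into $\Dlk_{i,j}=\lk^a_{i,j}+m_i+m_j\pmod 2$ and $\Tlk_{i,j,k}=m_k\lk^a_{j,i}-m_i\lk^a_{j,k}$ then reproduces Table \ref{table:tlk}.

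The main obstacle is the abelian-ness step, and it is genuinely delicate for $Y_{k,l,\be}$ and $Z_{k,\be}$, where the large full twists overlap the shift blocks so that $\tmA^a$ produces long conjugating words. One must verify, using the defining relations of $H_m$ and the centrality of $\omega$, that the imposed relations collapse the group \emph{exactly} to $\Z^4$: neither too little, since a surviving commutator would leave a non-abelian quotient (as happens for $\mS_m(\Delta'^2,\Delta^2)$ when $m$ is large, where the group is $H_{m-1}\times\Z$), nor too much, since an unwanted identification among the four meridians would lower the rank below four. Tracking where each shift sends each generator and simplifying the conjugators is where the care lies; by contrast, once the component structure is fixed, the double- and triple-linking computations are routine signed crossing counts.
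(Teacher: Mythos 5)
Your proposal is correct and follows essentially the same route as the paper: it uses the quotient presentation $H_m/\langle a_i=\tmA^a(a_i)\rangle$ together with the reduction (\ref{eqn:reduce}) to show that the $\sigma_i^{2e}$ and full-twist factors yield commutators of adjacent block-meridians, the shift factors identify generators within each component, and the centrality of $a_1\cdots a_m$ supplies the remaining commutators, after which the linking numbers follow from Theorems \ref{theorem:dlk} and \ref{theorem:tlk} by the same crossing-count bookkeeping. The paper simply carries out the relation-chasing you correctly flag as the delicate step, explicitly for $X_{k,l,\be}$ and $Y_{k,l,\be}$ (relations (\ref{eqn:X1})--(\ref{eqn:X9}) and (\ref{eqn:Y1})--(\ref{eqn:Y5})).
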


\begin{table}[htbp]
\caption{Double and Triple linking numbers of $\mS_{m}(a, \Delta^2)$}
\label{table:tlk}
\begin{tabular} {ccccccc}
\hline
 $a$ & $\Dlk_{1,2}$ & $\Dlk_{1,3}$ & $\Dlk_{1,4}$ & $\Dlk_{2,3}$ & $\Dlk_{2,4}$ & $\Dlk_{3,4}$ \\ \hline
$X_{k,l,\be}$ & $k$ & $l+1$ & $0$ & $k+l+1$ & $k+1$ & $l$  \\
$Y_{k,l,\be}$ & $1$ & $1$ & $0$ & $0$ & $0$  & $kl+l+1$\\
$Z_{k,\be}$ & $1$ & $1$ & $0$ & $0$ & $k$ &  $\begin{cases} k+1 \ \text{(if $k>1$)} \\
 1 \ \text{(if $k=1$)}\end{cases}$ \\ 
\hline
\end{tabular}

\vspace{0.4cm}

\begin{tabular} {ccc}
\hline
$a$  & $(\Tlk_{1,2,3}, \Tlk_{2,3,1}, \Tlk_{3,1,2})$ & $(\Tlk_{1,2,4}, \Tlk_{2,4,1}, \Tlk_{4,1,2})$  \\ \hline
  $X_{k,l,\be}$ & $(l e_{1}-e_{2}, e_{2}, -l e_{1})$ & $(e_{1},0,-e_{1})$\\
$Y_{k,l,\be}$ & $(-e_{2}, e_{2},0)$ & $(e_{1}, 0, -e_{1})$ \\
$Z_{k,\be}$ & $(-e_{2}, e_{2},0)$ & $(e_{1}-k e_{2}, ke_{2}, -e_{1})$ \\ \hline
\end{tabular}

\vspace{0.4cm}
\begin{tabular} {cccc}
\hline
$a$  & $(\Tlk_{1,3,4},\Tlk_{3,4,1}, \Tlk_{4,1,3})$ & $(\Tlk_{2,3,4}, \Tlk_{3,4,2}, \Tlk_{4,2,3})$ \\ \hline
  $X_{k,l,\be}$ & $(-e_{3},e_{3},0)$ & $(e_{2}-k e_{3}, k e_{3}, -e_{2})$\\
$Y_{k,l,\be}$ & $(k e_{1} -e_{2}, e_{2}, -k e_{1})$ & $(k e_{1}-e_2+l e_3,e_{2},-k e_{1}-l e_{3})$\\
$Z_{k>1,\be}$ & $(e_{1} -k e_{2}-e_{3}, k e_{2}+e_{3},-e_{1})$ & $(e_{1}-e_{3}, e_{3}, -e_{1})$ \\
$Z_{1,\be}$ & $(e_{1}-e_{3}, e_{3},-e_{1})$ & $(e_{1}+e_2-e_{3}, e_3-e_2, -e_{1})$ \\ \hline
\end{tabular}
\end{table}

\begin{proof}
Let $X=X_{k,l,\be}$ and let $G_{X}$ be the link group of $\mS_{k+l+2}(X ,\Delta^{2})$. Since 
\[ \tmA^{X}(a_{1})= \tmA^{\sigma_{1}^{2e_{1}}}(a_{1}) = (a_{1}a_{2})^{-e_{1}}a_{1}(a_{1}a_{2})^{e_{1}}, \]
the defining relation $\tmA^{X}(a_{1})=a_{1}$ is equivalent to the relation
\begin{equation}
\label{eqn:X1}
 a_{1}a_{2}=a_{2}a_{1}.
\end{equation}
For $i=2,\ldots,k$,
\[ \tmA^{X}(a_{i}) = a_{i+1}, 
 \]
hence the defining relations $\tmA^{X}(a_{i})=a_{i}$ $(i=2,\ldots,k)$ are equivalent to the relations
\begin{equation}
\label{eqn:X2}
a_{2}=a_{3} = \cdots = a_{k+1}.
\end{equation}
Next we observe that 
\[ \tmA^{X}(a_{k+1}) = (\tmA^{\sigma_{1}^{2e_{1}}} (a_{2})a_{k+2})^{-e_{2}}\cdot \tmA^{\sigma_{1}^{2e_{1}}} (a_{2}) \cdot (\tmA^{\sigma_{1}^{2e_{1}}} (a_{2})a_{k+2})^{e_{2}}.\]
Since $\tmA^{\sigma_{1}^{2e_{1}}} (a_{2})=a_2$ by (\ref{eqn:X1}), together with (\ref{eqn:X2}), the defining relation $\tmA^{X}(a_{k+1})=a_{k+1}$ is equivalent to the relation
\begin{equation}
\label{eqn:X3}
a_{2}a_{k+2}=a_{k+2}a_{2}.
\end{equation}
By a similar argument, using the relation (\ref{eqn:X3}) for $i=k+2,\ldots,k+l$, we conclude that the defining relation $\tmA^{X}(a_{i})=a_{i}$ of $G_{X}$ is equivalent to the relations
\begin{equation}
\label{eqn:X4}
a_{k+2}=a_{k+3}= \cdots = a_{k+l+1},
\end{equation}
and that the defining relation $\tmA^{X}(a_{k+l+2})=a_{k+l+2}$ is  equivalent to the relation
\begin{equation}
\label{eqn:X5}
a_{k+2} a_{k+l+2} = a_{k+l+2} a_{k+2}. 
\end{equation}
 
Now the relations (\ref{eqn:X1})--(\ref{eqn:X5}) and the defining relations of $H_{k+l+2}$ show that $G_{X}$ is a free abelian group of rank four generated by
\[ a_{1}, a_{2} (=a_{3} = \cdots = a_{k+1}), a_{k+2}( =a_{k+3}= \cdots = a_{k+l+1}), a_{k+l+2}. \]

Next let $Y=Y_{k,l,\be}$ and let $G_{Y}$ be the link group of $\mS_{k+l+2}(Y ,\Delta^{2})$. The proof that $G_{Y} \cong \Z^{4}$ is similar to the case $G_{X}$. By the relation (\ref{eqn:reduce}), 
\[ \tmA^{Y}(a_{1})= a_{k+l+2}^{e_{1}}a_{1}a_{k+l+2}^{-e_{1}}, \]
so the relation $\tmA^{Y}(a_{1})=a_{1}$ is equivalent to the relation
\begin{equation}
\label{eqn:Y1}
 a_{1}a_{k+l+2}=a_{k+l+2}a_{1}.
\end{equation}
For $i= k+4,\ldots, k+l+2$, it follows from the relation (\ref{eqn:reduce}) that 
\[ \tmA^{Y}(a_{i}) = a_{k+l+2}^{-(1-e_{1})}a_{i-1}a_{k+l+2}^{1-e_1}, \]
so the relations $\tmA^{Y}(a_{i})=a_{i}$ $(i=k+4,\ldots,k+l+2)$ are equivalent to the relations 
\begin{equation}
\label{eqn:Y2}
 a_{k+3}= a_{k+4} = \cdots = a_{k+l+1}= a_{k+l+2}.
\end{equation}
Then, by the relations (\ref{eqn:reduce}) and (\ref{eqn:Y2}), it turns out the relation $\tmA^{Y}(a_{k+3}) = a_{k+3}$ is equivalent to the relation
\begin{equation}
\label{eqn:Y3}
a_{k+2}a_{k+3} = a_{k+3}a_{k+2}, 
\end{equation}
and the relations (\ref{eqn:reduce}) and (\ref{eqn:Y3}) imply that the relations $\tmA^{Y}(a_{i}) = a_{i}$ $(i=4,\ldots, k+2)$ are equivalent to the relations
\begin{equation}
\label{eqn:Y4}
 a_{3}= a_{4}= \cdots = a_{k+2}.
\end{equation}
Finally, the relation $\tmA^{Y}(a_{3}) = a_{3}$ leads to the relation
\begin{equation}
\label{eqn:Y5}
 a_{2}a_{3}= a_{3}a_{2}.
\end{equation}
The relations (\ref{eqn:Y1})--(\ref{eqn:Y5}) and the defining relations of $H_{k+l+2}$ imply that $G_{Y}$ is a free abelian group of rank four generated by
\[ a_{1}, a_{2}, a_{3}( = a_{4}= \cdots = a_{k+2}) , a_{k+3} = (a_{k+4}= \cdots = a_{k+l+2}). \]
 
The assertion for $\mS_{k+3}(Z_{k,\be},\Delta^{2})$ is proved in a similar way. The computations of the double and triple linking numbers are direct by using Theorems \ref{theorem:dlk} and \ref{theorem:tlk}.   
\end{proof}

\subsection{Remarks on examples of rank four}

\subsubsection{Triple point numbers}

The {\it triple point number} of a surface link $S$, denoted by $t(S)$, is the minimal number of triple points among all surface link diagrams of $S$. It is regarded as a one of generalizations of the crossing number of classical links. Since the triple linking number can be defined as signed counts of triple points (see \cite{CKS,CKSS01}), there is a lower bound of the triple point number $t(S) \geq \sum_{i\neq j, j \neq k}\left| \Tlk_{i,j,k} \right|$. 

Along the same line of the arguments in \cite[Section 3]{N3}, using charts over a torus we can determine the triple point numbers for a few cases in our examples, by constructing surface link diagrams that attain the lower bounds explicitly. See Table \ref{table:triple}. 

\begin{table}[htbp]
\caption{Triple point numbers}
\label{table:triple}
 \begin{tabular} {cc}
\hline
 $a$ & $t(\mathcal{S}_4(a, \Delta^2))$ \\ \hline
$X_{1,1,(1,1,1)}$ & $16$ \\ 
$X_{1,1,(1,1,-1)}, X_{1,1,(-1,1,1)}$ & $20$\\ 
$X_{1,1,\pm(1,-1,1)}$ & $24$\\
$Y_{1,1,\pm(1,-1,1)}$ & $28$\\
$Z_{1,\pm(1,-1,-1)}$ & $32$\\ \hline
\end{tabular}
\end{table}

A surface link diagram with the minimum number of triple points is described by a graph called a chart over the torus $T$: Recall that in the proof of Theorem \ref{theorem:dlk} we took a surface link diagram using a specified projection $\R^{4} \rightarrow \R^{3}$ that extends the natural projection $N(T) = [0,1] \times [0,1] \times T \rightarrow [0,1] \times T$. A chart over $T$ is the graph on $T$ with additional information indicated, obtained as the image of the singular loci of the surface link diagram under the further projection $[0,1] \times T \rightarrow T$. In particular, a white vertex (6-valent vertex) of a chart represents a triple point in the corresponding surface link diagram. For details, see \cite{Kamada02, N, N3}. 

Figure \ref{fig:chart} presents an example of a chart that attains the minimum triple point number, which describes a surface link diagram of $\mS_{4}(X_{1,1,(1,1,1)},\Delta^{2})$. 

\begin{figure}[htbp]
\centerline{\includegraphics{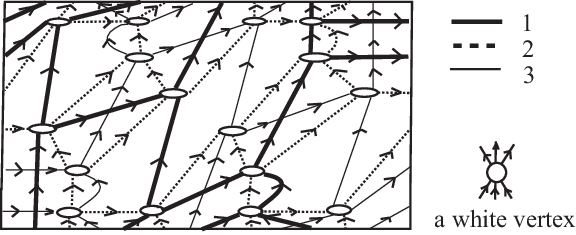}}
\caption{A $4$-chart on $T$ representing a surface link diagram of $\mathcal{S}_4(X_{1,1,(1,1,1)}, \Delta^2)=\mathcal{S}_4(\sigma_1^2\sigma_2^2\sigma_3^2, (\sigma_1\sigma_2\sigma_3)^4)$ attaining the minimum triple point numbers}
\label{fig:chart}
\end{figure}

\subsubsection{More rank four examples}

By modifying our examples we are able to construct more examples of abelian torus-covering $T^{2}$-links of rank four.

Clearly, if $\mS_{m}(a,\Delta^{2})$ is an abelian surface link, then so is 
$\mS_{m}(a\Delta^{2N},\Delta^{2})$. These two surface links have the same triple linking numbers, but may have different double linking number. 
In particular, the surface links obtained from those given in Theorem \ref{theorem:T4} by changing $a$ to $a\Delta^2$ are also abelian ($a=X_{k,l,\mathbf{e}}, Y_{k,l,\mathbf{e}}$, or $Z_{k,\mathbf{e}}$). 
By Theorem \ref{theorem:dlk}, 
\[ \Dlk_{i,j}(\mathcal{S}_m(a\Delta^{2}, \Delta^{2}))=\lk_{i,j}^{a} + m_{i} + m_{j} +m_i m_j \mod{2}, 
\]
so their double linking numbers are given by Table \ref{table3}. 

\begin{table}[htbp]
\caption{Double linking numbers of $\mathcal{S}_m(a\Delta^2, \Delta^2)$}
\label{table3}
  \begin{tabular} {ccccccc}
\hline
$a$ & $\Dlk_{1,2}$ & $\Dlk_{1,3}$ & $\Dlk_{1,4}$ & $\Dlk_{2,3}$ & $\Dlk_{2,4}$ & $\Dlk_{3,4}$ \\ \hline
$X_{k,l,\be}$ & $0$ & $1$ & $1$ & $(k+1)(l+1)$ & $1$ & $0$ \\
$Y_{k,l,\be}$ & $0$ & $k+1$ & $l$ &  $k$ & $l$  & $l+1$ \\
$Z_{k,\be}$ & $0$ & $0$ & $k$ & $1$ & $0$ & $\begin{cases} 1\  \text{(if $k>1$)} \\
 0 \ \text{(if $k=1$)}\end{cases}$ \\ \hline
 \end{tabular}
\end{table}

Let $\beta_{1} \in B_{k}$ and $\beta_{2} \in B_{l}$ be braids whose closures are unknots, and let $\iota_1: B_{k} \hookrightarrow B_{k+l+2}$ and $\iota_2:B_{l} \hookrightarrow B_{l+k+2}$ be inclusions defined by $\iota_{1}(\sigma_{i})= \sigma_{i+1}$ and $\iota_{2}(\sigma_{i})=\sigma_{i+k+1}$, respectively.
By similar calculation in the proof of Theorem \ref{theorem:T4}, for the braid 
\[ X_{\beta_{1},\beta_{2},\be} = \sigma_{1}^{2e_{1}} \iota_{1}(\beta_{1}) \sigma_{k+1}^{2e_{2}} \iota_{2}(\beta_{2}) \sigma_{k+l+1}^{2e_{3}} \in B_{k+l+2}, \]
$\mS_{k+l+2}(X_{\beta_{1},\beta_{2},\be},\Delta^{2})$ is an abelian surface link.

However, at this moment we cannot determine whether $\mS_{k+l+2}(X_{k,l,\be},\Delta^{2})$ and $\mS_{k+l+2}(X_{\beta_{1},\beta_{2},\be},\Delta^{2})$ are different or not.
In particular they have the same peripheral structures so their double and the triple linking numbers coincide.

\subsection{The double and triple linking numbers do not determine abelian $T^2$-links}

For classical links, the linking number is the complete invariant of links whose link groups are abelian: There are two links whose link groups are $\Z^{2}$, the positive and negative Hopf links, and they are distinguished by the linking number.

This is not the case for abelian surface links: we give an example of two different abelian $T^{2}$-links with the same double and triple linking numbers. 

Let $k>1$ be an odd integer and define $P_{k} \in B_{k+2}$ and $Q_{k} \in B_{3}$ by
\[
\left\{ \begin{array}{l}
 P_{k}= \sigma_{1}\sigma_{2}\cdots\sigma_{k-1} \sigma_{k}^{2}\sigma_{k+1}^{2} \\
Q_{k} = \sigma_{1}^{2}\sigma_{2}^{2k}. 
\end{array}
\right.\]

It is directly checked that $\mS_{k+3}(P_{k},\Delta^{2})$ and $\mS_{3}(Q_{k},\Delta^{2})$ are 3-component abelian $T^{2}$-links. Let $F^{P}_{1}, F^{P}_{2}$ and $F^{P}_{3}$ be the connected components of $\mS_{k+2}(P_{k},\Delta^{2})$ that corresponds to the first, the $(k+1)$-st and the $(k+2)$-nd strands of $P_{k}$, respectively. Similarly, let $F^{Q}_{1}, F^{Q}_{2}$ and $F^{Q}_{3}$ be the connected components of $\mS_{3}(Q_{k},\Delta^{2})$ that corresponds to the first, the second, and the third strands of $Q_{k}$, respectively.
 
\begin{proposition}\label{prop:same-lk}
Two abelian $T^{2}$-links $\mS_{k+2}(P_{k},\Delta^{2})$ and $\mS_{3}(Q_{k},\Delta^{2})$ have the same double and triple linking numbers, (hence they are link-cobordant), but they are not equivalent.
\end{proposition}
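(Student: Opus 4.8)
The first assertion is a direct computation with the formulae of Section \ref{sec4}, and I would begin by recording the linking data of the two presentations. For $\mS_3(Q_k,\Delta^2)$ all three components have degree $m_i=1$, with $\lk^a_{1,2}=1$, $\lk^a_{1,3}=0$, $\lk^a_{2,3}=k$ and $\lk^b_{i,j}=1$; for $\mS_{k+2}(P_k,\Delta^2)$ one has $m_1=k$, $m_2=m_3=1$, with $\lk^a_{1,2}=1$, $\lk^a_{1,3}=0$, $\lk^a_{2,3}=1$ and $\lk^b_{j,i}=m_i$. Feeding these into Theorem \ref{theorem:dlk} --- where the hypothesis that $k$ is odd is exactly what makes the parity contributions match --- gives $(\Dlk_{1,2},\Dlk_{1,3},\Dlk_{2,3})=(1,0,1)$ for both, and feeding them into Corollary \ref{cor:tlk} gives the same collection of triple linking numbers; the mechanism is that the asymmetry $\lk^a_{2,3}=k$ for $Q_k$ is traded for the asymmetry $m_1=k$ (hence $\lk^b_{2,1}=\lk^b_{3,1}=k$) for $P_k$, so that the products in Corollary \ref{cor:tlk} coincide. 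Equality of double and triple linking numbers then gives link-cobordance by the cited complete-invariant theorem \cite{CKSS01,San,San2}.

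For the second assertion the difficulty is that the two links share every standard invariant: both link groups are $\Z^3$, both have genus $3$, and by the previous paragraph their double and triple linking numbers agree; moreover a short check shows that even the full peripheral linking data agree up to the independent basis changes of each $H_1(F_j)$ that an equivalence is allowed to induce (the two matrices of $\lk$-data for a given component differ only by a transpose of the $\mathbf m$- and $\mathbf l$-rows). Thus no first-order invariant can separate them, and my plan is to pass to a $2$-component sublink. By Proposition \ref{prop:group} every sublink is again abelian, and an equivalence of surface links restricts to an equivalence of sublinks respecting the component correspondence. That correspondence is rigid here: $F_2$ is the unique component with $\Dlk=1$ to both others, so it is preserved, while $F_1$ and $F_3$ are separated by $|\Tlk_{2,3,1}|=k\neq 1=|\Tlk_{2,1,3}|$. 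Hence any equivalence would carry $F^P_2\cup F^P_3$ to $F^Q_2\cup F^Q_3$; restricting the braids to the surviving strands identifies these sublinks as $\mS_2(\sigma_1^{2},\sigma_1^{2})$ and $\mS_2(\sigma_1^{2k},\sigma_1^{2})$, so it suffices to prove $\mS_2(\sigma_1^{2k},\sigma_1^{2})\not\simeq \mS_2(\sigma_1^{2},\sigma_1^{2})$ for $k>1$.

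These two $T^2$-links again have identical group $\Z^2$ and identical double linking number, so I would separate them by a second-order invariant sensitive to the homotopy type of the complement rather than only to $\pi_1$. Writing $X$ for the complement, since $\pi_1(X)=\Z^2$ is abelian the universal abelian cover $\widetilde X$ is the universal cover, so $\pi_2(X)=H_2(\widetilde X)$; the first Alexander module $H_1(\widetilde X)$ is determined by the group and is useless, whereas the $\Z[\Z^2]$-module $H_2(\widetilde X)$ is not a group invariant. Using the commutative-braid presentation of Section \ref{sec3}, and the fact that $\mathcal{A}^{\sigma_1^{2}}$ is conjugation by $x_1x_2$ so that the relations read $[x_1,(x_1x_2)^{p}]=[x_1,(x_1x_2)^{q}]=1$, I would construct the equivariant chain complex of $\widetilde X$ and extract from $H_2(\widetilde X)$ (for instance through its elementary ideals, or through the degree-$2$ homology of a suitable finite cyclic cover of $X$) an invariant recording the value $k$. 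A cleaner route, if available, would be to cite a classification of these simple torus-covering links from \cite{N,N2} asserting dependence on the unordered pair $\{|p|,|q|\}$, which is $\{1,1\}$ versus $\{k,1\}$.

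The main obstacle is precisely this last step. The coincidence of the link groups pushes us off the level of $\pi_1$-invariants: by Crowell--Fox the multivariable Alexander ideals are invariants of the group together with its canonical abelianization and are therefore identical for the two links, so the separation must genuinely come from $H_2(\widetilde X)$ or an equivalent torsion-type invariant. The technical heart is the honest computation of this module --- in particular accounting for the $3$-cells, i.e.\ the identities among the braid relations, which the presentation $2$-complex does not see --- together with the verification that the answer depends on $k$. Producing that $k$-dependence, rather than matching the elementary invariants, is where the real work lies.
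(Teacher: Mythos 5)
Your computation of the double and triple linking numbers is correct and agrees with the paper's, so the first assertion and the link-cobordance claim are fine. The problem is entirely in the inequivalence argument.

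The step that fails is your ``short check'' that the peripheral linking data agree up to basis changes of $H_{1}(F_{j})$. For the third component the two matrices of data (rows $\iota_{*}(\mathbf{m}_{3})$, $\iota_{*}(\mathbf{l}_{3})$ expressed in the meridians $[\mu_{1}],[\mu_{2}]$ of $H_{1}(S^{4}-(F_{1}\cup F_{2}))\cong\Z^{2}$) are
$\bigl(\begin{smallmatrix}0&1\\ k&1\end{smallmatrix}\bigr)$ for $P_{k}$ and $\bigl(\begin{smallmatrix}0&k\\ 1&1\end{smallmatrix}\bigr)$ for $Q_{k}$. These are transposes of one another, but a transpose is \emph{not} an allowed move: an equivalence respecting the (forced) component correspondence acts on these matrices only by left multiplication by $\mathrm{GL}_{2}(\Z)$ (a basis change of $H_{1}(F_{3})$), since the meridian classes indexing the columns are canonical. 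Left multiplication preserves the row span, i.e.\ the image subgroup $\iota_{*}H_{1}(F_{3})\subset\Z^{2}$; for $P_{k}$ this subgroup contains $[\mu_{2}]$, while for $Q_{k}$ it consists of classes $b[\mu_{1}]+(ak+b)[\mu_{2}]$ and hence does not contain $[\mu_{2}]$ when $k>1$. So the peripheral structure you discarded is precisely the invariant that separates the two links --- and it is exactly the paper's proof.

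Because of this misstep you are driven to a much harder and ultimately unfinished route. Passing to the sublink $F_{2}\cup F_{3}$ throws away the distinguishing information (the coefficient $k$ of $[\mu_{1}]$ in $\iota_{*}^{P}(\mathbf{l}_{3})$ lives in the complement of $F_{1}$, which the sublink no longer sees): the resulting $2$-component links $\mS_{2}(\sigma_{1}^{2},\sigma_{1}^{2})$ and $\mS_{2}(\sigma_{1}^{2k},\sigma_{1}^{2})$ have the same group, the same double linking number, and the same peripheral images, and you give no proof that they are inequivalent --- indeed you acknowledge that the computation of $H_{2}(\widetilde{X})$ ``is where the real work lies.'' As written, the second assertion of the proposition is therefore not established. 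The fix is simply to undo the reduction and compare the images $\iota_{*}H_{1}(F_{3})$ in $H_{1}(S^{4}-(F_{1}\cup F_{2}))$ directly, as above.
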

\begin{proof}
It is routine to check $\mS_{k+2}(P_{k},\Delta^{2})$ and $\mS_{3}(Q_{k},\Delta^{2})$ have the same double and triple linking number: By Theorem \ref{theorem:tlk}  
\[ \left\{ \begin{array}{l}
\Tlk_{1,2,3}(\mS_{k+2}(P_{k},\Delta^{2})) = \Tlk_{1,2,3}(\mS_{3}(Q_{k},\Delta^{2})) = 1-k \\
\Tlk_{2,3,1}(\mS_{k+2}(P_{k},\Delta^{2})) =\Tlk_{2,3,1}(\mS_{3}(Q_{k},\Delta^{2})) = k \\
\Tlk_{3,1,2}(\mS_{k+2}(P_{k},\Delta^{2})) = \Tlk_{3,1,2}(\mS_{3}(Q_{k},\Delta^{2})) = -1 
\end{array}
\right.
\]
and since $k$ is chosen to be odd, by Theorem \ref{theorem:dlk},
\[ 
\left\{ \begin{array}{l}
 \Dlk_{1,2}(\mS_{k+2}(P_{k},\Delta^{2})) =  \Dlk_{1,2}((\mS_{3}(Q_{k},\Delta^{2}))) = 1\\
 \Dlk_{2,3}(\mS_{k+2}(P_{k},\Delta^{2})) = \Dlk_{2,3}(\mS_{3}(Q_{k},\Delta^{2})) = 1\\
  \Dlk_{1,3}(\mS_{k+2}(P_{k},\Delta^{2})) = \Dlk_{1,3}((\mS_{3}(Q_{k},\Delta^{2}))) = 0.
 \end{array}
 \right.
 \]
 
Assume that $\mS_{k+3}(P_{k},\Delta^{2})$ and $\mS_{3}(Q_{k},\Delta^{2})$ are equivalent. Then the computation of the triple linking numbers imply that the $i$-th $(i=1,2,3)$ component of $\mS_{k+2}(P_{k},\Delta^{2})$ must correspond to the $i$-th component of $\mS_{3}(Q_{k},\Delta^{2})$.

Let $\{\mathbf{m}^{X}_3, \mathbf{l}^{X}_{3}\}$ be a preferred basis of $H_{1}(F^{X}_{3})$ and let $\iota^{X}: F^{X}_{3} \rightarrow S^{4}-(F^{X}_{1} \cup F^{X}_{2}) $ be the embedding $(X= P,Q)$. 
Then, as we have seen in the proof of Theorem \ref{theorem:tlk},
\[ \left\{\begin{array}{l}
 \iota^{P}_{*}(\mathbf{m}^{P}_{3}) = [\mu_{2}]\\
 \iota^{P}_{*}(\mathbf{l}^{P}_{3}) = k[\mu_{1}] + [\mu_{2}]
\end{array}
\right.
\] 
and 
\[ \left\{\begin{array}{l}
 \iota^{Q}_{*}(\mathbf{m}^{Q}_{3}) = k[\mu_{2}]\\
 \iota^{Q}_{*}(\mathbf{l}^{Q}_{3}) = [\mu_{1}] + [\mu_{2}]
\end{array}
\right.
\]
where $[\mu_{i}]$ denotes the meridian of the $i$-th component.  
Since $k>1$, this implies that $[\mu_{2}]$ is in $\iota^{P}_{*}(\pi_{1}(F_{3}))$ but not in $\iota^{Q}_{*}(\pi_{1}(F_{2}))$. This is a contradiction.
\end{proof}

\subsection{Some higher rank examples}
We close the paper by constructing abelian surface links with relatively small genus.

\begin{proposition}
\label{prop:highgenus}
For $n>4$, there exists an abelian surface link $S_{n}$ of rank $n$ with genus
$\frac{1}{2}(n^{2}-3n+4)$.
\end{proposition}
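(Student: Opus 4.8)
The plan is to induct on the rank, using a rank-four abelian $T^{2}$-link as the base case and enlarging it one component at a time, each step attaching one fewer handle than the construction of Proposition~\ref{prop:exist} would use. By Theorem~\ref{theorem:T4} there is a rank-four abelian $T^{2}$-link $S_{4}$; its four components are tori, so $g(S_{4})=4=\frac{1}{2}(4^{2}-3\cdot 4+4)$ and the asserted formula already holds at $n=4$. The point of starting here is that such a seed contains a pair of components $F_{1},F_{2}$ that are genuinely linked (for the examples of Theorem~\ref{theorem:T4} one has $\Dlk_{1,2}\neq 0$), so their sheets run interlocked through a common region; this is the feature the induction exploits.

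For the step, assume $S_{k}$ ($k\geq 4$) is abelian of rank $k$ and genus $\frac{1}{2}(k^{2}-3k+4)$, containing the linked pair $F_{1},F_{2}$. I would form $S_{k+1}=S_{k}\cup F_{k+1}$ exactly as in Proposition~\ref{prop:exist}: $F_{k+1}$ is the sphere neighborhood of a base point together with handles running along disjoint paths to the sheets, and all points are pushed onto one complementary region by Roseman moves. The only modification is a more economical handle system: attach $k-2$ ordinary handles linking $F_{3},\dots,F_{k}$ one apiece, and a single \emph{double handle} threaded through the region where the interlocked sheets of $F_{1}$ and $F_{2}$ run close together, so that this lone handle crosses both sheets at once. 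Then $F_{k+1}$ carries $(k-2)+1=k-1$ handles, hence has genus $k-1$, and $g(S_{k+1})=g(S_{k})+(k-1)=\frac{1}{2}\bigl((k+1)^{2}-3(k+1)+4\bigr)$. Summing these increments from $S_{4}$ gives $g(S_{n})=4+\sum_{j=3}^{\,n-2} j=\frac{1}{2}(n^{2}-3n+4)$, as required.

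It remains to see that $S_{k+1}$ is still abelian of rank $k+1$, and this is where I expect the real work to lie. Carving $N(F_{k+1})$ out of $S^{4}-S_{k}$ adds the generator $\mu_{k+1}$ and the relations read off from the handles, while the abelian relations among $\mu_{1},\dots,\mu_{k}$ persist; since $H_{1}(S^{4}-S_{k+1})=\Z^{k+1}$, it suffices to check that $\mu_{k+1}$ commutes with every $\mu_{i}$. Each ordinary handle contributes its relation $[\mu_{k+1},\mu_{i}]=1$ just as in Proposition~\ref{prop:exist}, so the whole argument reduces to the local claim that the double handle forces $\mu_{k+1}$ to commute simultaneously with both $\mu_{1}$ and $\mu_{2}$. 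I would prove this by a direct Wirtinger computation near the linking locus: there the two sheets meet the new tube in a standard pattern, the tube acquires a double point curve with each sheet, and these yield exactly the two commutators $[\mu_{k+1},\mu_{1}]=1$ and $[\mu_{k+1},\mu_{2}]=1$ with no residual conjugation. The main obstacle is precisely to control this local model --- to arrange the double handle so that it produces these two clean relations and can be embedded disjointly from the ordinary handles and from $S_{k}$; once that is in place, the presentation collapses to $\Z^{k+1}$ and the induction closes.
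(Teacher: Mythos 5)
There is a genuine gap, and it sits exactly where you flagged it: the ``double handle.'' A $1$-handle attached to the new split unknotted sphere is a single tube whose core, closed up along the sphere, is a loop $\gamma$; the Wirtinger relations it contributes collapse to the \emph{single} relation $[\mu_{k+1},w]=1$, where $w\in\pi_1(S^4-S_k)$ is the word read off by $\gamma$ as it passes under the sheets. Threading one tube through the region where sheets of $F_1$ and $F_2$ run together gives $w=x_1x_2$ (or some other word involving both), hence $[\mu_{k+1},x_1x_2]=1$ --- never the two independent relations $[\mu_{k+1},x_1]=[\mu_{k+1},x_2]=1$. Nonzero $\Dlk_{1,2}$ does not change this. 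Moreover, the failure is structural rather than a matter of finding the right local model: with $F_{k+1}$ an unknotted sphere carrying $k-1$ tubes, the resulting group is the quotient of $\Z^{k}\ast\langle\mu_{k+1}\rangle$ by relations making $\mu_{k+1}$ commute with the subgroup $W=\langle w_1,\dots,w_{k-1}\rangle\leq\Z^{k}$. Since $k-1$ elements cannot generate $\Z^{k}$, $W$ is proper, the quotient is the nontrivial amalgamated product $\Z^{k}\ast_{W}(W\times\Z)$, and by its normal form $\mu_{k+1}$ fails to commute with any $x_i\notin W$. So no arrangement of $k-1$ handles on a newly added split sphere can close your induction; you genuinely need $k$ handles per new sphere component, which is exactly Proposition \ref{prop:exist} and gives only genus $\frac{n(n-1)}{2}$.

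The paper escapes this by never adding new components at all. It starts from the torus-covering link $\mS_{n}(\sigma_1^2\sigma_2^2\cdots\sigma_{n-1}^2,\Delta^2)$, which already has all $n$ components (each a torus, total genus $n$) and whose link group already contains the relations $[x_i,x_{i+1}]=1$ and $[x_i,x_1x_2\cdots x_n]=1$. It then adds $1$-handles to \emph{existing} components (the ``$1$-handle abelian linking'' of Figure \ref{fig:alink}), each costing one unit of genus and imposing one further commutation $[x_i,x_j]=1$; the product relations make some commutations come for free (e.g.\ once $x_n$ commutes with all but one generator, it is forced to be central), which is where the saving of roughly $n$ handles over $\binom{n}{2}$ comes from. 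If you want to salvage an inductive scheme, the increment has to exploit pre-existing relations in the seed link in this way, not a handle that does double duty.
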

\begin{proof}
Let $\mS_{n}=\mS_{n}(\sigma_{1}^{2}\sigma_{2}^{2}\cdots \sigma_{n-1}^{2},\Delta^{2})$ and let $F_{i}$ be the $i$-th component of $\mS_{n}$ that corresponds to the $i$-th strand of the basis braids.

Then the link group of $\mS_{n}$ is 
\[ 
\left \langle
 x_{1},\ldots,x_{n} \: 
\begin{array}{|cc}
 x_{i}(x_{1}x_{2}\cdots x_{n}) = (x_{1}x_{2}\cdots x_{n})x_{i} & (i=1,\ldots, n)\\
 x_{i}x_{i+1}=x_{i+1}x_{i} & (i=1,\ldots, n-1)
 \end{array}
\right \rangle.
\]
Here $x_{i}$ corresponds to a meridian of $F_{i}$.

We use the following operation which is a variant of construction in Proposition \ref{prop:exist}. 
Take a surface link diagram $D$ of $\mS_{n}$. In the Wirtinger presentation of the link group, each sheet of $D$ represents certain generator.
For distinct components $F_{i}$ and $F_{j}$, take two sheets of $D$, $E_i$ and $E_j$ so that their corresponding elements are $x_{i}$ and $x_{j}$, respectively. 

By performing Roseman moves if necessary, we may assume that $E_{i}$ and $E_{j}$ are the boundaries of a certain complementary region $C$ of $\R^{3}-D$.
In the neighborhood of $C$, we add a 1-handle to $F_{i}$ to modify the diagram $D$ as shown in Figure \ref{fig:alink}. 
We say this operation an {\em addition of a 1-handle abelian linking} between $x_{i}$ and $x_{j}$, since this operation adds the relation $x_{i}x_{j}=x_{j}x_{i}$ to the link group.  

\begin{figure}[htbp]
 \begin{center}
\includegraphics*[scale=0.5, width=70mm]{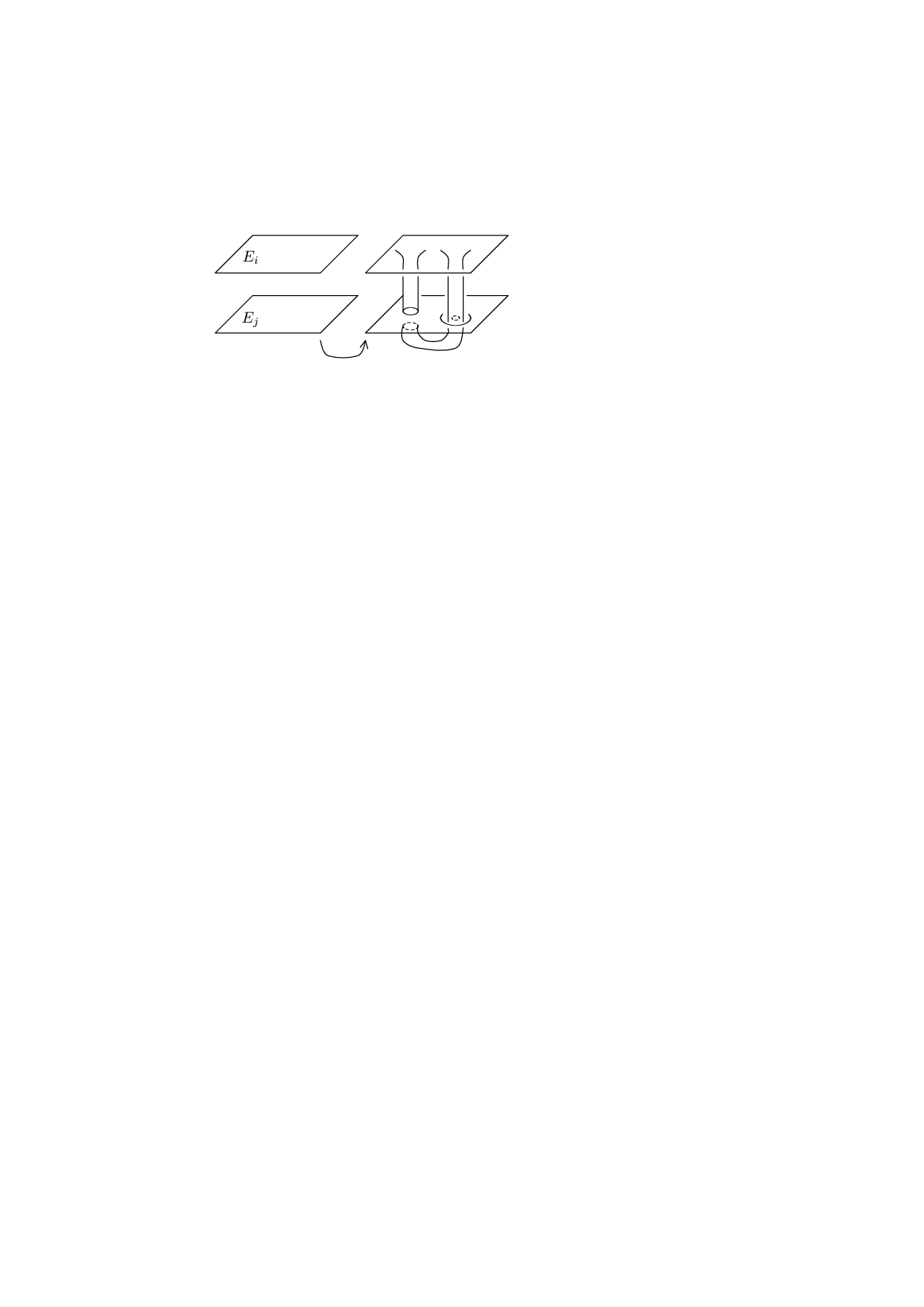}
\caption{Adding 1-handle abelian linking}
\label{fig:alink}
  \end{center}
\end{figure}

Now we are ready to construct desired abelian surface links.
First of all, we add a $1$-handle abelian linking between $x_{1}$ and $x_{n}$ to get an $n$-component surface link $\widehat{S}_{n}$ with genus $(n+1)$.
Let $G_{n}$ be the link group of $\widehat{S}_{n}$, which is presented by
\[ 
\left \langle
 x_{1},\ldots,x_{n} \: 
\begin{array}{|cc}
 x_{i}(x_{1}x_{2}\cdots x_{n}) = (x_{1}x_{2}\cdots x_{n})x_{i} & (i=1,\ldots, n)\\
 x_{i}x_{i+1}=x_{i+1}x_{i} & (i=1,\ldots, n-1) \\
 x_{n}x_{1} = x_{1}x_{n} &
 \end{array}
\right \rangle.
\]

We show that we are able to modify $\widehat{S}_{n}$ $(n>4)$ to obtain an abelian surface link of rank $n$, by adding 1-handle abelian linkings $H(n)= \frac{1}{2}(n^{2}-5n+2)$ times. 

As the first step, let us consider the case $n=5$. Let $S_{5}$ be the surface link obtained from $\widehat{S}_{5}$ by adding an abelian linking between $x_{5}$ and $x_{2}$. From the presentation of $G_{5}$, we can directly check that the link group of $S_{5}$ is a free abelian group of rank five.

In general case, first we add 1-handle abelian linkings between $x_{n}$ and $x_{i}$ for $i=2,\ldots,n-3$. From the relation
$x_{n}(x_{1}x_{2}\cdots x_{n}) =(x_{1}x_{2}\cdots x_{n})x_{n}$, this makes $x_{n}$ central. 
Then we add a 1-handle abelian linking between $x_{1}$ and $x_{n-1}$, and let $\bar{S}_{n}$ be the resulting surface link. Then the link group $\bar{G}_{n}$ of $\bar{S}_{n}$ decomposes as
$\bar{G}_{n}= \langle x_{n} \rangle \times G_{n-1} = \Z \times G_{n-1}$.
By induction, we are able to construct an abelian surface link $S_{n}$ from  $\bar{S}_{n}$ by adding 1-handle abelian linkings $H(n-1)$ times, so we get $H(n)=(n-4)+ 1 + H(n-1) = \frac{1}{2}(n^{2}-5n+2)$. 
The genus of $S_{n}$ is $(n+1)+H(n)=\frac{1}{2}(n^{2}-3n+4)$.
\end{proof}

\section*{acknowledgements}
The first author was partially supported by JSPS Postdoctoral Fellowships for Research Abroad. 
The second author was supported by JSPS Research Fellowships for Young Scientists ($24 \cdot 9014$), and iBMath through the fund for Platform for Dynamic Approaches to Living System from MEXT. 
We gratefully thank J.\ A.\ Hillman for pointing out in his review the error in Theorem \ref{theorem:bound} in the published version.

\end{document}